\subjclass[2010]{Primary: 11G10, Secondary: 11K06}
\keywords{Abelian surfaces, distribution modulo one}
\author{Chantal David}
\address{Concordia University \\Department of Mathematics and Statistics \\ 1455 de Maisonneuve West \\ Montr\'{e}al, Qu\'{e}bec H3G1M8 \\ Canada}
\email{cdavid@mathstat.concordia.ca}
\author{Derek Garton}
\address{Northwestern University \\ Department of Mathematics \\ 2033 Sheridan Road \\ Evanston, IL 60208 \\ United States}
\email{derek@math.northwestern.edu}
\author{Zachary Scherr}
\address{University of Michigan \\ Department of Mathematics \\  East Hall \\ Ann Arbor, MI 48109 \\ United States}
\email{zscherr@umich.edu}
\author{Arul Shankar}
\address{Institute for Advanced Study \\ School of Mathematics \\ Einstein Drive \\ Princeton, NJ 08540 \\ United States}
\email{arul.shnkr@gmail.com}
\author{Ethan Smith}
\address{Liberty University \\ Mathematics Department \\ 1971 University Boulevard \\ Lynchburg, VA 24502 \\ United States}
\email{ecsmith13@liberty.edu}
\author{Lola Thompson}
\address{University of Georgia\\ Department of Mathematics \\ Boyd Graduate Studies Building \\ Athens, GA 30601 \\ United States}
\email{lola@math.uga.edu}
\title{Abelian surfaces over finite fields with prescribed groups}
\DeclareMathAlphabet{\curly}{U}{rsfs}{m}{n}
\newtheorem{thm}{Theorem}[section]
\newtheorem{prop}[thm]{Proposition}
\newtheorem{corollary}[thm]{Corollary}
\newtheorem{lemma}[thm]{Lemma}
\theoremstyle{remark}
\theoremstyle{definition}
\newtheorem*{rmk}{Remark}
\begin{document}
\def\phi{\varphi}
\renewcommand{\labelenumi}{(\roman{enumi})}
\def\polhk#1{\setbox0=\hbox{#1}{\ooalign{\hidewidth
    \lower1.5ex\hbox{`}\hidewidth\crcr\unhbox0}}}
\newcommand{\del}{\ensuremath{\delta}}
\def\A{\curly{A}}
\def\B{\curly{B}}
\def\e{\mathrm{e}}
\def\E{\curly{E}}
\def\F{\Bbb{F}}
\def\C{\mathbf{C}}
\def\I{\curly{I}}
\def\N{\mathbf{N}}
\def\D{\curly{D}}
\def\Q{\mathbb{Q}}
\def\O{\curly{O}}
\def\V{\curly{V}}
\def\W{\curly{W}}
\def\Z{\Bbb{Z}}
\def\p{\tilde{p}}
\def\Pp{\curly{P}}
\def\pr{\mathfrak{p}}
\def\Proj{\mathbf{P}}
\def\q{\mathfrak{q}}
\def\Ss{\curly{S}}
\def\T{\curly{T}}
\def\Nm{\mathcal{N}}
\def\cont{\mathrm{cont}}
\def\ord{\mathrm{ord}}
\def\rad{\mathrm{rad}}
\def\lcm{\mathop{\mathrm{lcm}}}
\newcommand{\lp}{\ensuremath{\left(}}
\newcommand{\rp}{\ensuremath{\right)}}

\numberwithin{equation}{section}
\begin{abstract}

Let $A$ be an abelian surface over $\F_q$, the field of $q$ elements.
The rational points on $A/\F_q$ form an abelian group
$A(\F_q) \simeq \Z/n_1\Z \times \Z/n_1 n_2 \Z \times \Z/n_1 n_2 n_3\Z  \times\Z/n_1 n_2 n_3 n_4\Z$.
We are interested in knowing which groups of this shape actually arise as the group of points on some abelian surface over some finite field.
For a fixed prime power $q$, a characterization of the abelian groups that occur was recently found by Rybakov.
One can use this characterization to obtain a set of congruences
on certain combinations of coefficients of the corresponding Weil polynomials.
We use Rybakov's criterion to show that groups $\Z/n_1\Z \times \Z/n_1 n_2 \Z \times \Z/n_1 n_2 n_3\Z  \times\Z/n_1 n_2 n_3 n_4\Z$ do not occur if $n_1$ is very large with respect to $n_2, n_2, n_4$ (Theorem \ref{splitbound}), and occur with density zero in a wider
range of the variables  (Theorem \ref{splitbound-average}).
\end{abstract}
\maketitle

\markleft{C. David, D. Garton, Z. Scherr, A. Shankar, E. Smith, L. Thompson}

\section{Introduction}

Let $E$ be an elliptic curve over the finite field $\F_p$.
 It is well-known that the points on $E$ over $\F_p$ form a finite abelian group $E(\F_p)$ of ``rank" at most $2$, i.e.,
 \begin{equation}
 \label{groupoccurs} E(\F_p) \simeq \Z/n_1\Z \times \Z/n_1 n_2 \Z,
 \end{equation}
 for some positive integers $n_1, n_2$.
 It is natural to ask which groups arise in this manner as $p$ runs through all primes and as $E$ runs through
all curves over $\F_p$.
 Let ${S}(N_1, N_2)$ be the set of pairs of integers  $n_1 \leq N_1, n_2 \leq N_2$ such that there exists a prime $p$ and a curve $E / \F_p$ for which~\eqref{groupoccurs} holds.
The problem of estimating the size of $S(N_1,N_2)$ was first considered by Banks, Pappalardi, and Shparlinski in \cite{BPS},
who gave precise conjectures and numerical evidence for this problem.
In particular, they conjectured that the
``very split" groups (when $n_1$ is very large compared to $n_2$) occur with density zero.
This was proven by Chandee, David, Koukoulopoulos, and Smith in \cite{CDKS}, who showed that
\begin{equation*}
\#S(N_1, N_2) = \underline{o}(N_1 N_2)
\end{equation*}
when $N_1 \geq \exp{(N_2^{1/2+\varepsilon})}$, or equivalently, when $N_2 \leq (\log N_1)^{2 - \varepsilon}$.
Positive density results were also conjectured in~\cite{BPS} and proven in part in~\cite{CDKS}.

In this paper we examine analogous questions for abelian surfaces over finite fields.
Here and throughout, $q$ will denote the prime power $p^r$, and $A$ will denote an abelian surface over the finite field $\F_q$.
The points on $A$ over $\F_q$ possess the structure of a finite abelian group $A(\F_q)$ of rank at most $4$, i.e.,
\begin{eqnarray}
\label{groupoccurs-AV}
A(\F_q) \simeq \Z/n_1\Z \times \Z/n_1 n_2 \Z \times \Z/n_1 n_2 n_3\Z  \times\Z/n_1 n_2 n_3 n_4\Z
\end{eqnarray}
for some positive integers $n_1, n_2, n_3, n_4$.
For the sake of convenience, we will use the notation $G(n_1, n_2, n_3, n_4)$ to refer to  the group on the right hand side
of~\eqref{groupoccurs-AV}.
We then want to study which of the groups $G(n_1, n_2, n_3, n_4)$ actually occur when
we vary over all finite fields $\F_q$ and over all abelian surfaces $A/\F_q$.

For fixed $q$, a characterization of the groups occurring as the group of points on a general abelian variety
was recently found by Rybakov \cite{rybakov1, rybakov2}.
Rybakov's elegant criterion relates the Newton polygon of the characteristic polynomial of the variety to the Hodge polygon of the group.
The work of Rybakov may be viewed as generalizing R\"uck's characterization for elliptic curves~\cite{RU} to abelian varieties of any dimension.
We give a detailed description of these results in Section \ref{background}.

As with the case of elliptic curves, we expect that the ``very split" groups $G(n_1, n_2, n_3, n_4)$
(viz., when  $n_1, n_2$ are large with respect to $n_3, n_4$)
are less likely to occur. This is compatible with the general philosophy of
the Cohen-Lenstra heuristics, which predict that random abelian groups naturally
occur with probability inversely proportional to the size of their automorphism groups.
Note that the very split groups have many more automorphisms than the cyclic group of the same size.
In fact, Rybakov's criterion shows that whenever there is an abelian variety with $N$ points over $\F_q$,
the cyclic group of order $N$ will always occur.

We now state our main results.
We recall that an abelian variety is simple if it is not isogenous to a product of abelian varieties of lower dimension.
Our first result is that some groups never occur for simple abelian surfaces over $\F_q$.
In particular,  when $n_1$ is too large with respect to $n_2, n_3, n_4$, the group $G(n_1,n_2,n_3,n_4)$ does not arise as the group
of points on any simple abelian variety over any finite field.
This is different from the case of elliptic curves, where such a statement is true only in a probabilistic sense,
viz., very split groups occur with density zero as proven in \cite{CDKS}.

\begin{thm} \label{splitbound}
Suppose that $n_1 ,n_2, n_3, n_4$ are positive integers. If $$n_1 \ge  60 n_2^{1/4} n_3^{3/2}  n_4^{3/4} + 1,$$
then for every $q$, there is no simple abelian surface
$A/\F_q$ with $A(\F_q)\simeq G(n_1,n_2,n_3,n_4)$.
\end{thm}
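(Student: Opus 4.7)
My plan is to convert the hypothesis on the group structure of $A(\F_q)$ into arithmetic conditions on the Weil polynomial $f_A(T) = T^4 - aT^3 + bT^2 - aqT + q^2$ of $A$, and then use the simplicity assumption together with the CM structure to force a contradiction with the assumed lower bound on $n_1$.

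First I would argue that $A(\F_q) \simeq G(n_1,n_2,n_3,n_4)$ implies $A[n_1] \subseteq A(\F_q)$. By the Weil pairing this forces $n_1 \mid q-1$, and the triviality of Frobenius on $A[n_1]$ yields $f_A(T) \equiv (T-1)^4 \pmod{n_1}$, so $a \equiv 4$ and $b \equiv 6 \pmod{n_1}$. Since $\pi-1$ annihilates $A[n_1]$, it factors as $\pi - 1 = n_1 \eta$ for some $\eta \in \End(A)$. Multiplicativity of the degree then gives
\[
n_1^4 n_2^3 n_3^2 n_4 \;=\; |A(\F_q)| \;=\; \deg(1-\pi) \;=\; n_1^4 \deg(\eta),
\]
so $\deg(\eta) = n_2^3 n_3^2 n_4$. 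The Weil bound $|\pi|_\sigma = \sqrt{q}$ at every archimedean embedding yields $|\eta|_\sigma \in [(\sqrt{q}-1)/n_1,(\sqrt{q}+1)/n_1]$, and combined with $\deg(\eta) = n_2^3 n_3^2 n_4$ this sandwiches $\sqrt{q}$ within one of $n_1\,n_2^{3/4} n_3^{1/2} n_4^{1/4}$.

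Next I would invoke Honda--Tate theory. A simple abelian surface has $f_A = m_\pi^e$ with $e \in \{1,2,4\}$, and Tate's isogeny theorem shows that the cases $e=2$ and $e=4$ force $A$ to be $\F_q$-isogenous to a nontrivial product, contradicting simplicity in essentially every configuration (the finite list of supersingular exceptions can be handled directly using the divisibilities that the required form of $\deg(\eta)$ imposes on $n_2,n_3,n_4$). So we may assume $e=1$: $K = \Q(\pi)$ is a CM quartic with real quadratic subfield $K^+$, and $\eta \in \O_K$. Setting $T_\eta = \eta + \bar\eta$ and $N_\eta = \eta\bar\eta$ in $\O_{K^+}$, the relation $\pi\bar\pi = q$ rewrites as $q - 1 = n_1 T_\eta + n_1^2 N_\eta$, an identity lying in $\Z \subseteq \O_{K^+}$. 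Applying the nontrivial automorphism $\tau$ of $K^+/\Q$ and subtracting gives
\[
T_\eta - \tau(T_\eta) \;=\; n_1\bigl(\tau(N_\eta) - N_\eta\bigr),
\]
so the ``irrational part'' of $T_\eta$ in any integral basis of $\O_{K^+}$ is divisible by $n_1$. Moreover that irrational part cannot vanish: if it did, $\pi + \bar\pi$ would be a rational integer, $f_A$ would split over $\Q$ as the product of two integral quadratics, and then the $\tau$-invariance would force $f_A$ to be a square, contradicting the irreducibility of $f_A$ in the present case.

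Combining that nontrivial divisibility with the archimedean bound $|T_\eta|_\sigma \le 2(\sqrt{q}+1)/n_1$ at each real place of $K^+$ gives $n_1^2 \lesssim \sqrt{q}$, which via the Weil sandwich translates into a first upper bound on $n_1$ in terms of $n_2,n_3,n_4$. To sharpen this into the precise form $n_1 \le 60\,n_2^{1/4} n_3^{3/2} n_4^{3/4}$, I would next apply Rybakov's criterion to the Newton polygon of $f_A(T+1)$ at each prime $\ell \mid n_3$ (which encodes the third cyclic level of $A(\F_q)$); this forces a specific distribution of the $\ell$-adic valuations of the four conjugates of $\eta$ across the primes above $\ell$ in $\O_K$ and pins down the factor $n_3^{3/2}$. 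The main obstacle is exactly this last step: combining the Newton-polygon constraints at primes dividing $n_3$ with the archimedean estimates on $T_\eta$ and $N_\eta$ to extract the exponents $1/4,\, 3/2,\, 3/4$ and the explicit constant $60$. Once that is in place, the derived upper bound on $n_1$ contradicts the hypothesis $n_1 \ge 60\,n_2^{1/4} n_3^{3/2} n_4^{3/4} + 1$, completing the proof.
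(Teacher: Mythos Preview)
Your approach diverges substantially from the paper's, and the proof is incomplete: you yourself flag the last step as ``the main obstacle'' and do not carry it out. The intermediate bound you actually derive, $n_1^2 \lesssim \sqrt{q}$ (hence, via the Weil sandwich, roughly $n_1 \lesssim n_2^{3/4}n_3^{1/2}n_4^{1/4}$), is \emph{incomparable} to the target $n_1 < 60\,n_2^{1/4}n_3^{3/2}n_4^{3/4}+1$. For instance, when $n_3=n_4=1$ your bound is strictly weaker, so there is no way to ``sharpen'' it into the theorem without a genuinely new ingredient. The gesture toward Newton polygons at primes dividing $n_3$ supplies no mechanism for producing the exponents $1/4,\,3/2,\,3/4$ or the constant $60$.

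The paper's argument is both simpler and structurally different. It first uses the \emph{full} set of Rybakov congruences (not just the mod-$n_1$ information you extract from $A[n_1]\subseteq A(\F_q)$) to prove
\[
a_1\;\equiv\;-2(q+1)\pmod{n_1^2 n_2}
\]
(Lemma~\ref{derekcongs}); the modulus $n_1^2n_2$ rather than $n_1$ is the crux, and it requires the congruences~\eqref{congone}--\eqref{congthree} together, not just~\eqref{congfour}. Writing $a_1=kn_1^2n_2-2(q+1)$ and combining $|a_1|<4\sqrt q$ with the Weil bound pins the integer $k$ to a short interval around $2n_3\sqrt{n_2n_4}$ (Lemma~\ref{k-interval lemma}). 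The theorem then follows from the elementary diophantine estimate $\|\sqrt m\|>1/(3\sqrt m)$ for non-square $m$ (Lemma~\ref{squareroot}): the interval contains no integer once $\sqrt q$ is large, and plugging $\delta\ge (6n_3\sqrt{n_2n_4})^{-1}$ into Proposition~\ref{key} yields exactly $n_1<60\,n_2^{1/4}n_3^{3/2}n_4^{3/4}+1$. No CM-field analysis, no discriminant of $K^+$, and no Newton-polygon bookkeeping at primes dividing $n_3$ is needed.

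A secondary gap: you dispose of the cases $e\in\{2,4\}$ by asserting they force $A$ to be isogenous to a product. That is false---there \emph{are} simple abelian surfaces with $f_A=h_A^2$ (precisely those listed in Theorem~\ref{classification2}). The paper handles them via Xing's explicit determination of their groups, all of which have $n_1,n_2\le 2$ and hence never satisfy the hypothesis of the theorem.
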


We also show that fewer groups occur in a probabilistic sense.
Our next result essentially says that if $n_1$ or $n_2$ is very large compared to $n_3$ and $n_4$,
then $G(n_1,n_2,n_3,n_4)$ occurs with probability zero.
Given $N_1,N_2,N_3,N_4\ge 1$, we define $S(N_1,N_2,N_3,N_4)$ to be the set of quadruples $(n_1,n_2,n_3,n_4)$ for which
$N_j\le n_j\le 2N_j$ for $1\le j\le 4$ and there exists a prime power $q$ and a simple abelian surface $A/\F_q$ with
$A(\F_q)\simeq G(n_1,n_2,n_3,n_4)$.
Throughout, we write $f=\underline o(g)$ as $x\rightarrow\infty$ if $f/g\rightarrow 0$ as $x\rightarrow\infty$.

\begin{thm} \label{splitbound-average}
If
\begin{equation*}
\frac{N_1N_2^{1/4}}{N_3^{1/2}N_4^{1/4}}\rightarrow\infty
\end{equation*}
as $N_2N_4\rightarrow\infty$, then
\begin{equation*}
\#S(N_1,N_2,N_3,N_4)=\underline o(N_1N_2N_3N_4)
\end{equation*}
as $N_2N_4\rightarrow\infty$.
\end{thm}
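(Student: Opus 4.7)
The plan is to apply Rybakov's criterion to turn the condition $A(\F_q) \cong G(n_1,n_2,n_3,n_4)$ into explicit congruences on the coefficients of the Weil polynomial $P(x)=x^4+a_1x^3+a_2x^2+a_1qx+q^2$, and then to bound $\#S$ by counting the integer solutions of this system. Expanding $P(x+1)$ yields the characteristic polynomial of $\pi-1$ on the Tate module, with successive coefficients $a_1+4$, $3a_1+a_2+6$, $a_1(q+3)+2a_2+4$, and $P(1)$. Standard Smith-normal-form theory forces the product of the first $k$ elementary divisors of $\pi-1$ to divide the $k$-th of these coefficients, and Rybakov's criterion identifies those elementary divisors with $n_1,\,n_1n_2,\,n_1n_2n_3,\,n_1n_2n_3n_4$. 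Combined with the Weil-pairing consequence $n_1\mid q-1$ (from $A[n_1]\subseteq A(\F_q)$), this yields the necessary conditions
\begin{gather*}
n_1\mid q-1,\qquad n_1\mid a_1+4,\qquad n_1^2n_2\mid 3a_1+a_2+6,\\
n_1^3n_2^2n_3\mid a_1(q+3)+2a_2+4,\qquad n_1^4n_2^3n_3^2n_4=P(1),
\end{gather*}
so $\#S$ is at most the number of tuples $(n_j,q,a_1,a_2)$ satisfying this system together with the Weil bounds $|a_1|\le 4\sqrt q$ and $|a_2|\le 6q$.

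Next I would use Theorem \ref{splitbound} to discard the very split range: when $N_1>60(2N_2)^{1/4}(2N_3)^{3/2}(2N_4)^{3/4}$ every $(n_j)\in[N_j,2N_j]$ triggers its hypothesis, so $\#S=0$ outright. In the remaining range I would count by iterating over $(n_1,n_2)$ first. The Weil bound forces $q\approx\sqrt{P(1)}$ to lie in an interval of length $O(P(1)^{1/4})$, and the congruence $n_1\mid q-1$ picks out $O(P(1)^{1/4}/N_1)$ admissible $q$ for each fixed $(n_3,n_4)$. The congruence on $a_1$ modulo $n_1$ allows $O(\sqrt q/N_1)$ values of $a_1$; substituting the identity $P(1)=n_1^4n_2^3n_3^2n_4$ into the congruence $n_1^2n_2\mid 3a_1+a_2+6$ further pins $a_1$ modulo $n_1^2n_2$, which under the hypothesis $N_1N_2^{1/4}/(N_3^{1/2}N_4^{1/4})\to\infty$ leaves at most $O(1)$ values of $a_1$ (and hence of $a_2$). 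Finally, the last divisibility condition forces $n_3$ to divide a specific integer of size $O(q^{3/2}/(n_1^3n_2^2))$, and the identity $P(1)=n_1^4n_2^3n_3^2n_4$ then determines $n_4$.

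The main obstacle will be controlling the divisor count coming from the condition on $n_3$ without introducing a factor that swamps the target bound: a worst-case $\tau$ estimate is too lossy, so I expect to need an average divisor estimate, and possibly to split the argument into subcases based on the relative sizes of $N_2,N_3,N_4$. After this bookkeeping, the total should be compared with $N_1N_2N_3N_4$ to confirm that the ratio tends to $0$ precisely when $N_1N_2^{1/4}/(N_3^{1/2}N_4^{1/4})\to\infty$ as $N_2N_4\to\infty$, as required.
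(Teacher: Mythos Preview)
Your sketch has a genuine gap, and it is not one that average divisor estimates will close. The counting you outline after invoking Theorem~\ref{splitbound} does not produce a bound of $\underline o(N_1N_2N_3N_4)$. There is a logical circularity in the third paragraph: you assert that fixing $(n_1,n_2,q)$ and using the congruence modulo $n_1^2n_2$ leaves $O(1)$ values of $a_1$ ``and hence of $a_2$,'' and then that the divisibility by $n_1^3n_2^2n_3$ forces $n_3$ to divide a specific integer. But $a_2$ is only pinned down once $P(1)=n_1^4n_2^3n_3^2n_4$ is known, i.e.\ once $n_3,n_4$ are already fixed; if instead you let $a_2$ vary freely in its residue class modulo $n_1^2n_2$, the count of $(n_1,n_2,q,a_1,a_2)$ is of order $N_1N_2\cdot (N_1N_2^{3/2}N_3N_4^{1/2})\cdot 1\cdot (N_2^{1/2}N_3N_4^{1/2})\approx N_1^2N_2^3N_3^2N_4$, which exceeds $N_1N_2N_3N_4$ by a factor of $N_1N_2^2N_3$. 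The divisor step cannot recover this loss. The underlying issue is that you are bounding the number of \emph{tuples} $(n_1,\dots,n_4,q,a_1,a_2)$, whereas what is needed is that for \emph{most} quadruples $(n_1,\dots,n_4)$ there is \emph{no} admissible $(q,a_1,a_2)$ at all.

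The paper's proof supplies exactly this missing idea. From the congruence $a_1\equiv -2(q+1)\pmod{n_1^2n_2}$ together with the Weil bound, one writes $a_1=kn_1^2n_2-2(q+1)$ and shows that the integer $k$ must lie in a short interval centred at $2n_3\sqrt{n_2n_4}$, of length tending to zero under the hypothesis. Whether such a $k$ exists is therefore governed by the size of $\|2n_3\sqrt{n_2n_4}\|$, and the decisive input is that the triple sequence $2n_3\sqrt{n_2n_4}$ is \emph{uniformly distributed modulo one} as $N_2N_4\to\infty$ (proved via Weyl's criterion and van~der~Corput--type exponential sum bounds). This equidistribution shows that the fraction of triples $(n_2,n_3,n_4)$ with $\|2n_3\sqrt{n_2n_4}\|$ small tends to zero, which is precisely the statement your purely arithmetic counting cannot reach. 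No amount of refining the divisor bookkeeping substitutes for this analytic step.
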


\section{Weil polynomials and groups of abelian surfaces} \label{background}


A classification of simple abelian varieties over $\F_q$ (up to $\F_q$-isogeny) is given by
Tate-Honda theory, which gives a one-to-one correspondence between isogeny classes of
simple abelian varieties over $\F_q$ and conjugacy classes of Weil numbers (algebraic integers whose
conjugates have absolute value $q^{1/2}$).
This classification can be stated using the characteristic polynomial of the Frobenius endomorphism $\pi_A$ of $A/\F_q$.
This polynomial, which we denote by $f_A(T)$, determines $A$ up to isogeny, and it has Weil numbers as its roots.
For an abelian surface $A/\F_q$, we write
\begin{equation*}
f_A(T) = T^4 + a_1 T^3 + a_2 T^2 + a_1 q T + q^2.
\end{equation*}
The number of $\F_q$-rational points on $A$ is equal to $f_A(1)$ and hence is an invariant of the isogeny class.
The fact that the roots of $f_A(T)$ are Weil numbers implies that
\begin{equation}\label{hasse-weil}
(\sqrt q -1)^4\le\#A(\F_q)\le(\sqrt q+1)^4.
\end{equation}

If $A$ is a simple abelian surface, then $f_A(T) = h_A(T)^e$
where $h_A(T)$ is an irreducible polynomial in $\Z[T]$ whose roots are Weil numbers.
Furthermore, the endomorphism algebra $\mathrm{End}_{\F_q}(A) \otimes \Q$ is a field if and only if $e=1$. Computing the local invariants of the algebra $\mathrm{End}_{\F_q}(A) \otimes \Q$ allows one to obtain a correspondence between the set of simple abelian surfaces over $\F_q$ such that $\mathrm{End}_{\F_q}(A) \otimes \Q$ is a field and the set of irreducible polynomials $f(T)$ of degree 4 whose roots are Weil numbers and whose monic irreducible divisors $f_i(T)$ over $\Q_p$ have integer values of $\nu_p(f_i(0))/\nu_p(q)$. Here and throughout, we use the notation $\nu_p$ to denote the usual $p$-adic valuation. R\"{u}ck \cite{ruck} gave the following explicit characterization of these polynomials.


\begin{thm}[R\"{u}ck]\label{ruckthm} The set of $f_A(T)$ for all abelian varieties $A$ over $\F_q$ of dimension $2$ whose algebra $\mathrm{End}_{\F_q}{(A)}\otimes\Q$ is a field is equal to the set of polynomials $f(T) = T^4 + a_1T^3 + a_2T^2 + a_1qT + q^2$ where the integers $a_1$ and $a_2$ satisfy the conditions
\begin{enumerate}[(a.)]
\item $|a_1| < 4q^{1/2}$, $2|a_1|q^{1/2} - 2q < a_2 < a_1^2/4 + 2q$,
\item $a_1^2 - 4a_2 + 8q$ is not a square in $\Z$, and
\item either
\begin{enumerate}[(i.)]
\item $\nu_p(a_1) = 0, \nu_p(a_2) \geq r/2$ and $(a_2 + 2q)^2 - 4qa_1^2$ is not a square in $\Z_p$,
\item $\nu_p(a_2) = 0$, or
\item $\nu_p(a_1) \geq r/2$, $\nu_p(a_2) \geq r$, and $f(T)$ has no root in $\Z_p.$
\end{enumerate}
\end{enumerate}
\end{thm}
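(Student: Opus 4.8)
The plan is to make the Honda--Tate dictionary recalled just above completely explicit. By that dictionary, the polynomials $f_A(T)$ arising from simple abelian surfaces $A/\F_q$ with $\End_{\F_q}(A)\otimes\Q$ a field are exactly the $f(T)=T^4+a_1T^3+a_2T^2+a_1qT+q^2\in\Z[T]$ that (A) have all roots Weil numbers, (B) are irreducible over $\Q$, and (C) satisfy $\nu_p(f_i(0))/\nu_p(q)\in\Z$ for every monic irreducible factor $f_i$ of $f$ over $\Q_p$; condition (C) is precisely the requirement $e=1$ for the associated division algebra $D$, equivalently the vanishing of all local invariants of $D$ above $p$ (once $\pm q^{1/2}$ is excluded as a root, $\Q(\pi)$ is a quartic CM field, so there are no real places to consider). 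So it remains to translate (A), (B), (C) into (a), (b), (c).

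For (A) and (B) I would use the substitution $\beta=T+q/T$. The coefficient pattern of $f$ is exactly the functional equation $T^4f(q/T)=q^2f(T)$, so the roots of $f$ come in pairs $\alpha,q/\alpha$ and $f(T)=0$ is equivalent to $g(\beta)=0$ with $g(\beta)=\beta^2+a_1\beta+(a_2-2q)$. A pair $\alpha,q/\alpha$ consists of Weil numbers exactly when $\beta=\alpha+q/\alpha$ is real with $|\beta|\le 2q^{1/2}$, and is a genuinely non-real pair precisely when $|\beta|<2q^{1/2}$; so requiring both roots of the real quadratic $g$ to lie in $(-2q^{1/2},2q^{1/2})$ amounts, via the standard root-location inequalities ($g(\pm 2q^{1/2})>0$, vertex in the interval, positive discriminant), to exactly the inequalities in (a). Granting (a), $f$ has no real root, so $f$ is reducible over $\Q$ iff it has a rational quadratic factor iff both roots of $g$ are rational iff $\mathrm{disc}(g)=a_1^2-4a_2+8q$ is a perfect square; hence (b) is equivalent to (B). Thus (a) and (b) cut out precisely the irreducible degree-$4$ Weil polynomials.

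For (C), note that an irreducible factor $f_i$ of $f$ over $\Q_p$ of degree $d_i$ has all its roots of one common valuation $\nu_p(f_i(0))/d_i$, so the local invariant of $D$ at the corresponding place is $\nu_p(f_i(0))/\nu_p(q)\bmod\Z$; thus (C) $\iff$ every such invariant vanishes $\iff e=1$. I would run the analysis on the $\nu_p$-Newton polygon of $f$, which is symmetric by the functional equation and is determined by $\nu_p(a_1)$ and $\nu_p(a_2)$ relative to $r=\nu_p(q)$. If $\nu_p(a_2)=0$ then two roots of $f$ are units and two have valuation $r$; over $\Q_p$, $f$ splits into a ``unit part'' and a ``valuation-$r$ part'', and in every sub-case all the ratios $\nu_p(f_i(0))/r$ are integers, so $e=1$ automatically --- this is case (ii), with no side condition. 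If $\nu_p(a_1)=0$ and $\nu_p(a_2)\ge r/2$ then the root valuations are $0,r/2,r/2,r$; over $\Q_p$ one gets $f=f_0f_{1/2}f_1$ with $f_0,f_1$ linear (forcing integer ratios) and $f_{1/2}=T^2-(\pi_2+\pi_3)T+q$ the ``supersingular'' quadratic, and the ratios are all integral iff $f_{1/2}$ stays irreducible over $\Q_p$, i.e.\ iff $(\pi_2+\pi_3)^2-4q$ is a nonsquare. Writing $\beta_{\mathrm{ord}},\beta_{\mathrm{ss}}$ for the two roots of $g$ (both now in $\Q_p$) one has $(\beta_{\mathrm{ord}}^2-4q)(\beta_{\mathrm{ss}}^2-4q)=g(2q^{1/2})g(-2q^{1/2})=(a_2+2q)^2-4qa_1^2$, while $\beta_{\mathrm{ord}}^2-4q=(\pi_1-\pi_4)^2$ is a square in $\Q_p$; hence $f_{1/2}$ is irreducible iff $(a_2+2q)^2-4qa_1^2$ is a nonsquare in $\Z_p$ --- this is case (i). If $\nu_p(a_1)\ge r/2$ and $\nu_p(a_2)\ge r$ then all four roots have valuation $r/2$, and $f$ either stays irreducible over $\Q_p$ or acquires a linear factor whose root (of valuation $r/2$) yields the ratio $1/2\notin\Z$; so here $e=1$ iff $f$ has no root in $\Z_p$ --- this is case (iii). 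Finally I would verify that any $(a_1,a_2)$ satisfying (a)--(b) but lying in none of these three regimes (with their side conditions) produces a $\Q_p$-irreducible factor $f_i$ with $\nu_p(f_i(0))/r\notin\Z$, hence $e\ge 2$, so that the complement of (c) never gives a surface.

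The main obstacle I expect is exactly this $p$-adic bookkeeping: pinning down in each Newton-polygon regime the factorization type of $f$ over $\Q_p$, the completions $\Q(\pi)_v$ and the valuation of $\pi$ in each, and checking that the two slightly ad hoc side conditions --- ``$(a_2+2q)^2-4qa_1^2$ not a square in $\Z_p$'' in (i) and ``$f$ has no root in $\Z_p$'' in (iii) --- are precisely what makes every invariant of $D$ above $p$ vanish. One must also be careful at the boundary valuations (e.g.\ $r$ odd versus even, and $\nu_p(a_1)=r/2$ exactly) in order to confirm that the three regimes, together with the excluded $e\ge 2$ cases, exhaust all $(a_1,a_2)$ allowed by (a)--(b).
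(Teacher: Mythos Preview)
The paper does not prove this theorem; it is quoted from R\"uck's paper \cite{ruck} and used as a black box, so there is no ``paper's own proof'' to compare against. That said, your proposal is essentially the standard route (and is in spirit R\"uck's own argument): reduce to the quadratic $g(\beta)=\beta^2+a_1\beta+(a_2-2q)$ via $\beta=T+q/T$ to translate the Weil--number and irreducibility conditions into (a) and (b), and then read off the Honda--Tate local invariants from the $p$-adic Newton polygon of $f$ to obtain (c). Your derivations of (a), (b), and of cases (i) and (ii) in (c) are correct, including the nice identity $(\beta_1^2-4q)(\beta_2^2-4q)=(a_2+2q)^2-4qa_1^2$ and the observation that $\beta_1^2-4q=(\pi_1-\pi_4)^2$ is already a square in $\Q_p$.

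Two points deserve tightening. In case (iii) you write that $f$ ``either stays irreducible over $\Q_p$ or acquires a linear factor''; this omits the possibility that $f$ splits over $\Q_p$ as a product of two \emph{irreducible} quadratics, in which case each factor has constant term of valuation $r$ and the ratios are integral, so $e=1$ as well. Your stated conclusion (``$e=1$ iff $f$ has no root in $\Z_p$'') is nonetheless correct, since any linear factor forces a ratio $1/2$, while the irreducible and $2{+}2$ cases are both fine. Second, the ``final verification'' you defer --- that every $(a_1,a_2)$ allowed by (a)--(b) but falling outside the three valuation regimes (e.g.\ $\nu_p(a_1)=0$ with $0<\nu_p(a_2)<r/2$, or $0<\nu_p(a_1)<r/2$ with $\nu_p(a_2)>0$, or $\nu_p(a_1)\ge r/2$ with $0<\nu_p(a_2)<r$) forces a factor with non-integral $\nu_p(f_i(0))/r$ --- is genuinely where most of the remaining work lies; it is routine Newton-polygon casework, but it must be done, and the boundary cases (notably $r$ odd versus even) need care, exactly as you flag.
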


The polynomials $f_A(T)$ corresponding to simple abelian surfaces $A$ over $\F_q$
whose algebra $\mathrm{End}_{\F_q}(A) \otimes \Q$ is not a field are much rarer.
They can be described explicitly as well.

\begin{thm}[Waterhouse, Xing]\label{classification2}
The characteristic polynomial $f_A(T)$ of any simple abelian variety $A$ of dimension $2$ over $\F_q$
whose algebra $\mathrm{End}_{\F_q}{(A)}\otimes\Q$ is not field must be of the form
\begin{enumerate}[(a.)]
\item $f_A(T) = (T^2 - q)^2$ and $r$ is odd,
\item $f_A(T) = (T^2 + q)^2$, $r$ is even, and $p \equiv 1 \pmod 4$, or
\item $f_A(T) = (T^2 \pm q^{1/2} T + q)^2$, $r$ is even, and $p \equiv 1 \pmod 3$.
\end{enumerate}
\end{thm}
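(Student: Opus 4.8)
The plan is to combine the structural facts recalled above --- that $f_A(T) = h_A(T)^e$, with $h_A$ the minimal polynomial over $\Q$ of the Frobenius $\pi = \pi_A$, and that $\End_{\F_q}(A)\otimes\Q$ is a field exactly when $e = 1$ --- with the Honda--Tate description of the central division algebra $D := \End_{\F_q}(A)\otimes\Q$ over $K := \Q(\pi)$ in terms of its local invariants. Since $D$ is assumed not to be a field we have $e \ge 2$, and since $e\cdot\deg h_A = \deg f_A = 4$ the only options are $(e,\deg h_A) = (4,1)$ and $(e,\deg h_A) = (2,2)$. The first is impossible: then $\pi$ is a rational integer with $|\pi| = q^{1/2}$, so $q$ is a square and $\pi = \pm q^{1/2}$, but the isogeny class attached to such a Weil number is that of a supersingular elliptic curve (equivalently, the invariants of $D$ are $\tfrac12$ at $p$ and at $\infty$ and $0$ elsewhere, forcing $e = 2$), contradicting $\dim A = 2$.

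So $\deg h_A = 2$ and $f_A(T) = h_A(T)^2$ with $h_A$ an irreducible quadratic whose roots are Weil numbers. The constant term of $h_A$ is the product of its two roots, a rational integer of archimedean absolute value $q$, hence $\pm q$. If it is $-q$, then the roots are $\pm q^{1/2}$, so $h_A(T) = T^2 - q$; irreducibility forces $q$ to be a non-square, i.e.\ $r$ odd, and $K = \Q(\sqrt q)$ is real quadratic. A direct check with the Honda--Tate formula (the invariants of $D$ are $\tfrac12$ at each of the two archimedean places of $K$ and $0$ at the primes above $p$) confirms that this is consistent with $e = 2$, and we land in case (a).

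In the remaining case $h_A(T) = T^2 + b_1T + q$ with $b_1^2 < 4q$, so $K$ is imaginary quadratic. Here $e = 2$ forces $\mathrm{inv}_v(D) = \tfrac12$ for some place $v \mid p$ of $K$, and evaluating $\mathrm{inv}_v(D) = \bigl(\nu_v(\pi)/\nu_v(q)\bigr)[K_v : \Q_p]$ using $\pi\bar\pi = q$ shows, after a short case analysis, that $p$ must split in $K$ (in the inert and ramified cases every such invariant vanishes, giving $e = 1$) and that $\nu_{\pr}(\pi) = r/2$ for both primes $\pr\mid p$; in particular $r$ is even. Then $(\pi) = (p^{r/2})$ as $\O_K$-ideals, so $\pi = u\,p^{r/2}$ for a unit $u\in\O_K^\times$. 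Since $\pi\notin\R$ (otherwise $\deg h_A = 1$), $u$ is a non-real root of unity, which is possible only for $K = \Q(\sqrt{-1})$ or $K = \Q(\sqrt{-3})$. Computing $b_1 = -(u+\bar u)p^{r/2}$ over the non-real roots of unity in each of these two fields, and imposing that $p$ split (so $p\equiv 1 \pmod 4$, respectively $p \equiv 1 \pmod 3$), yields exactly $h_A(T) = T^2 + q$ --- case (b) --- and $h_A(T) = T^2 \pm q^{1/2}T + q$ --- case (c). The converse, that each listed polynomial actually arises, follows by running the same analysis in reverse, but is not needed for the stated theorem.

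The only real difficulty is bookkeeping: one must invoke the Honda--Tate computation of the invariants of $D$ correctly, keep straight which of the inert/ramified/split behaviours of $p$ in $K$ can support $e = 2$, and not forget the archimedean places of $K$, which intervene precisely in case (a). There is no conceptual obstacle beyond the local analysis of the division algebra $D$.
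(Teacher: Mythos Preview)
The paper does not supply a proof of this theorem at all: it is stated as a classical result, attributed to Waterhouse and Xing, and simply quoted for later use. So there is no ``paper's own proof'' to compare against.

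That said, your argument is correct and is precisely the standard Honda--Tate analysis one finds (in essence) in Waterhouse's thesis and in Xing's papers. The reduction to $(e,\deg h_A)\in\{(4,1),(2,2)\}$, the elimination of $(4,1)$ via the supersingular elliptic curve computation, the dichotomy on the constant term $\pm q$ of $h_A$, and the local-invariant case split (inert/ramified give $e=1$, split with $\nu_{\pr}(\pi)=r/2$ gives $e=2$, forcing $r$ even and $\pi=u\,p^{r/2}$ with $u$ a non-real unit in $\O_K$) are all handled cleanly. The identification of the two exceptional imaginary quadratic fields $\Q(i)$ and $\Q(\sqrt{-3})$ and the resulting trace computations yielding $b_1=0$ and $b_1=\pm q^{1/2}$ are right, as are the splitting conditions $p\equiv 1\pmod 4$ and $p\equiv 1\pmod 3$.

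One small stylistic point: in case (a) you might say explicitly that the sum of the local invariants is $\tfrac12+\tfrac12\equiv 0$ (so the global obstruction vanishes) and that the period is $2$, matching $e=2$; you do this implicitly, but since you flag the archimedean places as the crux of case (a), spelling it out costs nothing.
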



The group structures for these ``exceptional" polynomials $f_A(T)$ were studied by Xing in~\cite{xing1} and~\cite{xing2}. In the respective cases (corresponding to Theorem~\ref{classification2}),
Xing showed that the group structures which arise are precisely
\begin{enumerate}[{\it (a.)}]
\item $\left( \Z / (q-1) \Z \right)^2$, $\left( \Z / 2 \Z \right)^2\times \left( \Z / \frac{q-1}{2} \Z \right)^2 $, or $ \Z/2\Z \times \Z /\frac{q-1}{2} \Z\times\Z/(q-1)\Z$;
\label{casea}
\item $\left( \Z / (q+1) \Z \right)^2$; or
\item $\left( \Z / (q \pm q^{1/2} + 1) \Z \right)^2$.
\end{enumerate}

We refer the reader to~\cite{xing1} for a precise description of when each group corresponding to the first case arises.
Thus, the abelian surfaces $A$ whose algebra $\mathrm{End}_{\F_q}{(A)}\otimes\Q$ is not a field give rise to very few groups
$G(n_1, n_2, n_3, n_4)$.
More importantly, $n_1,n_2\le 2$ for all such groups,and hence they
do not satisfy the conditions of Theorem \ref{splitbound} or Theorem \ref{splitbound-average}.
Therefore, we exclude this case from consideration for the remainder of the paper.

For the typical case of abelian surfaces whose algebra is a field, there is a very elegant criterion due to Rybakov \cite{rybakov1, rybakov2}
that characterizes those isogeny classes which contain a variety $A$ with $A(\F_q)\simeq G(n_1, n_2, n_3, n_4)$.
The result of Rybakov applies to abelian varieties of any dimension $g\ge 1$.
We state it below in full generality and then for the particular case of abelian surfaces.
We first need some definitions.

Let $\ell$ be a prime, and let $Q(T)=\sum_i Q_iT^i$ be a polynomial of degree $d$ with $Q(0)=Q_0\ne 0$.
The \textit{Newton polygon} $\mathrm{Np}_\ell(Q)$ is the boundary (without vertical lines) of the lower convex hull of the points
 $(i, \nu_\ell(Q_i))$ for $0 \leq i \leq d$ in $\mathbb{R}^2$.
 Now let $0 \leq m_1 \leq m_2 \leq \cdots \leq m_r$ be nonnegative integers, and let $H=\bigoplus_{i=1}^r \Z/\ell^{m_i}\Z$.
 The \textit{Hodge polygon} $\mathrm{Hp}_\ell(H,r)$ is the convex polygon with vertices $(i, \sum_{j=1}^{r-i} m_j)$ for $0 \leq i < r$.
 Given an abelian group $G$, we let $G_\ell$ denote the $\ell$-primary component of $G$.
 The following is the main result of~\cite{rybakov1}.

\begin{thm}[Rybakov]\label{thm-rybakov2}
Let $A$ be an abelian variety of dimension $g$ over a finite field whose algebra $\mathrm{End}_{\F_q}{(A)}\otimes\Q$ is a field.
Let $f_A(T)$ denote its characteristic polynomial,
and let $G$ be an abelian group of order $f_A(1)$ that can be generated by $2g$ or fewer elements.
Then $G$ is the group of points on some variety in the isogeny class of $A$ if and only if $\mathrm{Np}_\ell(f_A(1-T))$ lies on or above
$\mathrm{Hp}_\ell(G_\ell, 2g)$ for every prime number $\ell$.
\end{thm}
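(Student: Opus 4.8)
\emph{Proof strategy.} Write $q=p^r$. Since $K:=\mathrm{End}_{\F_q}(A)\otimes\Q$ is a field, $A$ is simple and (in the notation of the excerpt) $e=1$, so $f_A(T)$ is irreducible of degree $2g$, $K=\Q(\pi)$ for the Weil number $\pi=\pi_A$, and $[K:\Q]=2g$. The plan is to split the problem prime by prime, translate each local piece into a question about lattices in $K\otimes\Q_\ell$ and the cokernel of multiplication by $1-\pi$, dispose of the ``only if'' direction by a general linear-algebra inequality, and handle the ``if'' direction by an explicit lattice construction. First I would record the dictionary: within the isogeny class of $A$, an abelian variety $B$ together with an isogeny to $A$ is the same datum as a compatible family consisting of a $\Z_\ell[F]$-stable lattice $T_\ell B\subset V_\ell A$ for each $\ell\ne p$ (automatically $F^{-1}$-stable, as $\deg F=q^g\in\Z_\ell^\times$) and a Dieudonn\'e lattice at $p$; it is classical that every such family occurs, and that the lattices at the finitely many primes $\ell\mid f_A(1)$ may be prescribed independently. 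Since $\#B(\F_q)=f_A(1)$ throughout the class, any group that occurs has order $f_A(1)$ — part of the hypotheses on $G$ — and $B(\F_q)_\ell\cong T_\ell B/(1-F)T_\ell B$ for $\ell\ne p$, with an analogous description from the Dieudonn\'e module at $p$. Thus the theorem reduces to a local statement: for each prime $\ell$, a finite $\Z_\ell$-module $H$ with $|H|=\ell^{\nu_\ell(f_A(1))}$ generated by at most $2g$ elements occurs as $L/(1-F)L$ for some admissible lattice $L$ if and only if $\mathrm{Np}_\ell(f_A(1-T))$ lies on or above $\mathrm{Hp}_\ell(H,2g)$.

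Next I would use the field hypothesis to make the local picture explicit. Since $V_\ell A$ is a faithful module over $K\otimes\Q_\ell=\prod_{\mathfrak l\mid\ell}K_{\mathfrak l}$ of the same $\Q_\ell$-dimension $2g$, it is free of rank one, with $F$ acting as multiplication by $\pi$. So for $\ell\ne p$ the admissible lattices are exactly the $\Z_\ell[\pi,\pi^{-1}]$-lattices $L$ in $K\otimes\Q_\ell$, and $H\cong L/(1-\pi)L$; here $1-\pi$ acts injectively (as $f_A(1)=\#A(\F_q)\ne 0$), its characteristic polynomial on $K\otimes\Q_\ell$ is $\pm f_A(1-T)$, and $|H|=\ell^{\nu_\ell(N_{K/\Q}(1-\pi))}=\ell^{\nu_\ell(f_A(1))}$ automatically. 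The case $\ell=p$ is the same with $K\otimes\Q_\ell$ replaced by $K\otimes_{\Q_p}W(\F_q)[1/p]$ and the further requirement that $L$ be stable under the semilinear operators $F$ and $V$ with $FV=p$; this adds bookkeeping but leads to the same cokernel problem, so I would treat both cases in parallel.

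For the ``only if'' direction I would isolate an elementary fact: if $\phi$ is an injective endomorphism of a free $\Z_\ell$-module $L$ of rank $r$, then $\mathrm{Np}_\ell(\det(T\cdot\mathrm{id}-\phi))$ lies on or above $\mathrm{Hp}_\ell(\operatorname{coker}\phi,r)$. Writing the elementary divisors of $\phi$ as $\ell^{a_1},\dots,\ell^{a_r}$ with $a_1\le\cdots\le a_r$ and choosing $U,V\in\mathrm{GL}_r(\Z_\ell)$ with $U\phi V=\operatorname{diag}(\ell^{a_1},\dots,\ell^{a_r})$, one has $\Lambda^k\phi=(\Lambda^kU)^{-1}\operatorname{diag}(\ell^{a_{i_1}+\cdots+a_{i_k}})(\Lambda^kV)^{-1}$, each entry of which is divisible by $\ell^{a_1+\cdots+a_k}$; hence the coefficient of $T^{r-k}$ in $\det(T\cdot\mathrm{id}-\phi)$, being $\pm\operatorname{tr}\Lambda^k\phi$, has $\nu_\ell$-valuation at least $a_1+\cdots+a_k$. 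As $(r-k,\,a_1+\cdots+a_k)$ is precisely the vertex at abscissa $r-k$ of the (convex) Hodge polygon of $\operatorname{coker}\phi$, the lower convex hull defining the Newton polygon lies on or above it. Taking $\phi=1-\pi$, with characteristic polynomial $\pm f_A(1-T)$ and $\operatorname{coker}(1-\pi)\cong H$, yields the ``only if'' direction.

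The ``if'' direction is the heart of the matter and, I expect, the main obstacle. Given $H$ with the prescribed order, a generating set of size at most $2g$, and $\mathrm{Hp}_\ell(H,2g)$ weakly below $\mathrm{Np}_\ell(f_A(1-T))$, one must build an admissible lattice $L$ with $L/(1-\pi)L\cong H$; since a valid Hodge polygon with $2g$ slopes determines the module, it is equivalent to realize every valid Hodge polygon lying weakly below $\mathrm{Np}_\ell(f_A(1-T))$. I would attack this by (i) computing the cokernels of two explicit lattices — the maximal lattice $\mathcal{O}_K\otimes\Z_\ell$ and a ``monogenic'' lattice such as $\Z_\ell[\pi]$ (or a translate) — and noting that in particular the cyclic group of order $\ell^{\nu_\ell(f_A(1))}$, whose Hodge polygon is the lowest valid one, always occurs; and (ii) proving an interpolation lemma: if an admissible $L$ has cokernel with Hodge polygon $P$ and $P'$ arises from $P$ by a single elementary move that keeps it a valid Hodge polygon for $2g$ generators still lying weakly below $\mathrm{Np}_\ell(f_A(1-T))$, then an explicit local modification of $L$ inside $K\otimes\Q_\ell$ produces an admissible lattice whose cokernel has Hodge polygon $P'$. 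Iterating (ii) from (i) reaches all polygons below the Newton polygon. The genuine difficulty — and what makes the surface case go beyond R\"uck's elliptic-curve argument — is that for $g\ge 2$ the orders $\Z_\ell[\pi]$ and $\Z_\ell[\pi,\pi^{-1}]$ need not be maximal or Gorenstein, so there is no clean classification of their lattices; controlling the $\Z_\ell$-module structure of $L/(1-\pi)L$, not merely its module structure over the order, as $L$ ranges over all admissible lattices is exactly what forces the careful, largely combinatorial analysis of Newton-polygon slopes in step (ii). With (ii) established, choosing such an $L_\ell$ for each $\ell\mid f_A(1)$ and $L_\ell=T_\ell A$ for all other $\ell$ and invoking the realization statement of the first step produces a $B$ isogenous to $A$ with $B(\F_q)\cong G$.
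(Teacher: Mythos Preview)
The paper does not prove this theorem. Theorem~\ref{thm-rybakov2} is quoted as ``the main result of~\cite{rybakov1}'' and is used as a black box; the only related content in the paper is the unwinding of its conclusion for $g=2$ into the explicit congruences of Corollary~\ref{congs}. So there is no in-paper proof to compare your proposal against.

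As a stand-alone sketch of Rybakov's argument, your outline is broadly on target: the localization via Tate/Dieudonn\'e modules, the identification of $B(\F_q)_\ell$ with the cokernel of $1-F$ on $T_\ell B$, and your exterior-power proof of the ``only if'' direction are all standard and correct. The ``if'' direction, however, is where the real content lies, and your proposal leaves it essentially as a wish: the ``interpolation lemma'' in step (ii) is asserted but not proved, and in Rybakov's actual paper this is handled not by moving between Hodge polygons via elementary moves but by a direct construction of a suitable $\Z_\ell[F,V]$-lattice with prescribed cokernel, working place by place over the primes of $K$ above $\ell$ and using the slope data of the Newton polygon. Your step (i) is also slightly off: $\Z_\ell[\pi]$ need not give the cyclic cokernel in general, and the lattice realizing the cyclic group is built differently. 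If you intend to write a genuine proof rather than a strategy, you should either follow Rybakov's construction or supply the missing details of your interpolation step, including why each elementary move on the Hodge polygon can be realized by an admissible (i.e., $\Z_\ell[\pi,\pi^{-1}]$- or Dieudonn\'e-) sublattice modification; as written, that step is a gap.
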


For the case of abelian surfaces, we rewrite the conditions of Theorem \ref{thm-rybakov2} explicitly as follows.

\begin{corollary}\label{congs}
Let $A/\F_q$ be an abelian surface, and suppose that $\mathrm{End}_{\F_q}{(A)}\otimes\Q$ is a field.
Let $f_A(T)= T^4 + a_1T^3 + a_2T^2 + a_1qT + q^2$ denote its Weil polynomial.
Then the isogeny class of $A$ contains a variety with group of points isomorphic to $G(n_1,n_2,n_3,n_4)$
if and only if
\begin{equation} \label{congone}
n_1^4n_2^3n_3^2n_4 = f_A(1) = q^2+a_1q+a_2+a_1+1
\end{equation}
 and
\begin{eqnarray}
4+3a_1+2a_2+qa_1&\equiv& 0\pmod{n_1^3n_2^2n_3},\label{congtwo}\\
6+3a_1+a_2&\equiv& 0\pmod{n_1^2n_2},\label{congthree}\\
4+a_1&\equiv& 0\pmod{n_1}.\label{congfour}
\end{eqnarray}
\end{corollary}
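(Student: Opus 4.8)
The plan is to obtain Corollary~\ref{congs} by unwinding Rybakov's criterion (Theorem~\ref{thm-rybakov2}) in the case $g=2$. First note that $G=G(n_1,n_2,n_3,n_4)$ is by definition a product of four cyclic groups, hence can be generated by $2g=4$ elements, so Theorem~\ref{thm-rybakov2} is applicable to it; moreover $\#G=n_1\cdot n_1n_2\cdot n_1n_2n_3\cdot n_1n_2n_3n_4=n_1^4n_2^3n_3^2n_4$, so the condition $\#G=f_A(1)$ of that theorem is exactly~\eqref{congone}, using $f_A(1)=q^2+a_1q+a_2+a_1+1$. (In the direction $\Rightarrow$ of the corollary this is forced by the equality of point counts in an isogeny class; in the direction $\Leftarrow$ it is assumed, and is what allows Theorem~\ref{thm-rybakov2} to be invoked.) Next I would compute $f_A(1-T)$ by substituting $T\mapsto 1-T$ and expanding with the binomial theorem, obtaining
\[
f_A(1-T)=T^4-(4+a_1)T^3+(6+3a_1+a_2)T^2-(4+3a_1+2a_2+a_1q)T+f_A(1).
\]
Write $f_A(1-T)=\sum_{i=0}^4 c_iT^i$ for these coefficients, with the convention $\nu_\ell(0)=+\infty$ (a vanishing $c_i$ then makes the associated congruence hold trivially).

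Fix a prime $\ell$ and set $a=\nu_\ell(n_1)$, $b=\nu_\ell(n_2)$, $c=\nu_\ell(n_3)$, $d=\nu_\ell(n_4)$. The four cyclic factors of $G$ have orders whose $\ell$-adic valuations are $a\le a+b\le a+b+c\le a+b+c+d$, already nondecreasing, so by the definition given in the text the Hodge polygon $\mathrm{Hp}_\ell(G_\ell,4)$ has vertices $(i,q_i)$ for $0\le i\le 4$, where $q_0=4a+3b+2c+d=\nu_\ell(f_A(1))$, $q_1=3a+2b+c=\nu_\ell(n_1^3n_2^2n_3)$, $q_2=2a+b=\nu_\ell(n_1^2n_2)$, $q_3=a=\nu_\ell(n_1)$, and $q_4=0$. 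I would then prove the elementary equivalence: $\mathrm{Np}_\ell(f_A(1-T))$ lies on or above $\mathrm{Hp}_\ell(G_\ell,4)$ if and only if $\nu_\ell(c_i)\ge q_i$ for each $i\in\{0,1,2,3,4\}$. The two polygons share their left endpoint $(0,\nu_\ell(f_A(1)))$ — equal because of~\eqref{congone} — and their right endpoint $(4,0)$, the latter since $c_4=1$. For the forward direction, each point $(i,\nu_\ell(c_i))$ lies on or above the Newton polygon, which lies on or above the Hodge polygon, so $\nu_\ell(c_i)\ge q_i$. For the converse, the region on or above the convex polygon $\mathrm{Hp}_\ell(G_\ell,4)$ is convex and contains every point $(i,\nu_\ell(c_i))$, hence contains their convex hull, hence contains its lower boundary $\mathrm{Np}_\ell(f_A(1-T))$.

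Finally I would read off the five inequalities. The case $i=0$ is the equality $\nu_\ell(c_0)=\nu_\ell(f_A(1))=q_0$ forced by~\eqref{congone}, and the case $i=4$ is $0\ge 0$; both hold automatically for every $\ell$. The remaining conditions $\nu_\ell(c_i)\ge q_i$ for $i=1,2,3$ and all primes $\ell$ are, respectively, the divisibilities $n_1^3n_2^2n_3\mid c_1$, $n_1^2n_2\mid c_2$, and $n_1\mid c_3$, and these — after dropping the irrelevant signs of the $c_i$ — are exactly~\eqref{congtwo},~\eqref{congthree}, and~\eqref{congfour}. Together with~\eqref{congone} from the first step, this gives both implications of the corollary via Theorem~\ref{thm-rybakov2}. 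There is no deep obstacle here: the proof is a translation of Rybakov's criterion, and the step requiring the most care is the combinatorial equivalence of the last paragraph, where one must check that the Newton and Hodge polygons really do have the same two endpoints (so the comparison is between two functions on $[0,4]$) and then apply the convexity argument; the rest is the bookkeeping of the substitution $T\mapsto 1-T$ and of the $\ell$-adic valuations.
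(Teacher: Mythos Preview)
Your proposal is correct and is precisely the unwinding of Theorem~\ref{thm-rybakov2} that the paper intends: the paper states Corollary~\ref{congs} without proof, describing it only as ``rewrit[ing] the conditions of Theorem~\ref{thm-rybakov2} explicitly'' for $g=2$, and your computation of $f_A(1-T)$, identification of the Hodge polygon vertices $q_i$, and convexity argument for the Newton/Hodge comparison supply exactly those omitted details. The only point worth noting is that the paper's definition of $\mathrm{Hp}_\ell(H,r)$ lists vertices for $0\le i<r$, but your inclusion of the vertex $(4,0)$ is both harmless and necessary for the endpoint-matching step, and is clearly the intended convention.
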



We remark that Corollary \ref{congs} implies that if $f_A(1)=N$, then the cyclic group of order $N$ occurs as a group of points on some abelian surface in the isogeny class of $A$ since in that case we have $n_1 = n_2 = n_3=1$, so the congruences \eqref{congtwo} -- \eqref{congfour} are  trivially satisfied.

\section{Key proposition}

In this section we prove the following key proposition.
As is somewhat common, for any real number $x$, we write $||x||$ for the distance between $x$ and its nearest integer neighbor.
To ease notation, we define
\begin{equation}\label{delta}
\delta
=\delta(n_1,n_2,n_3,n_4)
=\begin{cases}
1&\text{if } 2n_3\sqrt{n_2n_4}\in\Z,\\
\| 2n_3\sqrt{n_2n_4} \|&\text{otherwise}
\end{cases}
\end{equation}
for any positive integers $n_1,n_2,n_3,n_4$.

\begin{prop}\label{key}
Suppose that $A/\F_q$ is a simple abelian surface and that $\mathrm{End}_{\F_q}(A)\otimes\Q$ is a field.
Suppose further that $A(\F_q)\simeq G(n_1,n_2,n_3,n_4)$.
Then
\begin{equation*}
n_1<\frac{10n_3^{1/2}n_4^{1/4}}{\delta n_2^{1/4}}+\frac{1}{n_2^{3/4}n_3^{1/2}n_4^{1/4}}.
\end{equation*}
\end{prop}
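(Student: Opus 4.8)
The plan is to extract from Corollary~\ref{congs} the arithmetic constraints on $a_1,a_2$ and combine them with the Weil bounds on $f_A(1)$ to pin down $n_1$. First I would use congruence~\eqref{congfour} to write $a_1 = -4 + k n_1$ for some integer $k$, and then feed this into congruence~\eqref{congthree}: substituting $a_1 = -4 + kn_1$ gives $6 + 3(-4+kn_1) + a_2 \equiv 0 \pmod{n_1^2 n_2}$, i.e. $a_2 \equiv 6 - 3kn_1 \pmod{n_1^2 n_2}$ (and modulo $n_1$ one gets $a_2 \equiv 6 \pmod{n_1}$, so write $a_2 = 6 + \ell n_1$ and refine $\ell$ modulo $n_1 n_2$). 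Similarly congruence~\eqref{congtwo} constrains a further combination. The point is that all of $a_1$, $a_2$, and the relevant linear combination $4 + 3a_1 + 2a_2 + qa_1$ are forced to lie in explicit residue classes modulo large moduli built from the $n_i$'s.

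Next I would turn to the key identity~\eqref{congone}: $n_1^4 n_2^3 n_3^2 n_4 = q^2 + a_1 q + a_2 + a_1 + 1$. Completing the square in $q$ (or rather treating this as a quadratic in $q$), one has $q^2 + a_1 q + (a_2 + a_1 + 1 - N) = 0$ where $N = n_1^4 n_2^3 n_3^2 n_4$, so $q = \frac{-a_1 + \sqrt{a_1^2 - 4(a_2+a_1+1-N)}}{2}$, which forces $a_1^2 - 4a_2 - 4a_1 - 4 + 4N$ to be a perfect square, say $D^2$. The Weil bounds~\eqref{hasse-weil} say $(\sqrt q - 1)^4 \le N \le (\sqrt q + 1)^4$; equivalently $|N - (q^2 + 2q(\text{something}))|$ is controlled, but more usefully $|\sqrt N - ?|$ is small. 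The cleanest route: from~\eqref{hasse-weil}, $|N^{1/4} - \sqrt q| \le 1$, hence $\sqrt q$ lies within $1$ of $n_1 n_2^{3/4} n_3^{1/2} n_4^{1/4}$. I would then revisit the expression for $f_A(1)$ and find that the congruence conditions, together with the near-equality $q \approx n_1^2 n_2^{3/2} n_3 n_4^{1/2}$, force $2n_3\sqrt{n_2 n_4}$ to be close to an integer unless $n_1$ is small: specifically, the residue class information on $a_1, a_2$ combined with $q$ being an integer very close to a specific algebraic number produces a Diophantine approximation inequality of the form $n_1^2 \cdot (\text{distance of } 2n_3\sqrt{n_2 n_4} \text{ to } \Z) \lesssim n_3 n_4^{1/2}/n_2^{1/2}$ plus a lower-order term.

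More concretely, I expect the mechanism to be: the three congruences say $a_1 = -4 + n_1 x$, $a_2 = 6 - 3n_1 x + ?$, and these determine $f_A(1) \bmod n_1^4 n_2^3 n_3^2 n_4$ to be $0$ only when a certain quadratic expression in the free parameters vanishes modulo $n_1$-power moduli; but $q$ being an honest integer satisfying $q^2 + a_1 q + a_2 + a_1 + 1 = N$ means $(2q + a_1)^2 = a_1^2 - 4a_2 - 4a_1 - 4 + 4N$, and writing $a_1 = -4 + n_1 x$ and reducing, $(2q + a_1)^2 \equiv 4N \equiv 0$ modulo high powers, forcing $2q + a_1$ to be divisible by $n_1^2 n_2 \cdots$, which combined with the Weil-bound estimate $|2q + a_1| \approx 2q \approx 2 n_1^2 n_2^{3/2} n_3 n_4^{1/2}$ shows $2 n_2^{3/2} n_3 n_4^{1/2} / n_2 = 2 n_2^{1/2} n_3 n_4^{1/2}$-ish is near an integer — i.e. $2 n_3 \sqrt{n_2 n_4}$ near an integer — with error $O(1/n_1^2 \cdot \text{stuff})$. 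Unwinding gives exactly $\delta \lesssim 1/n_1^2 \cdot$ (the bound), which rearranges to the stated inequality $n_1 < 10 n_3^{1/2} n_4^{1/4}/(\delta n_2^{1/4}) + 1/(n_2^{3/4} n_3^{1/2} n_4^{1/4})$.

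The main obstacle will be making the Diophantine-approximation step precise with the correct constants: one must carefully track how the congruence~\eqref{congtwo} (the one involving $q$, which is itself an output) interacts with the quadratic relation, being attentive that $q$ appears on both sides, and then convert the ``divisibility of $2q + a_1$ by a large modulus'' into a sharp statement about $\|2n_3\sqrt{n_2 n_4}\|$ rather than a weaker one about $\|\sqrt{n_2 n_4}\|$ or similar. Handling the edge case where $2n_3\sqrt{n_2 n_4} \in \Z$ (so $\delta = 1$) separately is routine — there the claimed bound $n_1 < 10 n_3^{1/2} n_4^{1/4}/n_2^{1/4} + \text{small}$ should follow more directly from Rück's condition~(b.) of Theorem~\ref{ruckthm} that $a_1^2 - 4a_2 + 8q$ is not a square, since that non-squareness fails when everything aligns too perfectly. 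I would also need condition~(b.) or~(c.) of Theorem~\ref{ruckthm} to rule out the degenerate alignment entirely, ensuring the strict inequality; the constant $10$ presumably absorbs the slack from the Weil bounds $|N^{1/4} - \sqrt q| \le 1$ being used twice and from rounding $2q+a_1$ to the nearest multiple of the modulus.
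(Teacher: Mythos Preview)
Your overall architecture matches the paper's: establish that $a_1+2(q+1)$ (equivalently $2q+a_1+2$) is divisible by $n_1^2n_2$, deduce that the integer quotient $k$ must approximate $2n_3\sqrt{n_2n_4}$, then use R\"uck's condition~(b.) to exclude the exact-integer edge case. You also correctly anticipate that the constant $10$ comes from bounding the width of the interval around $2n_3\sqrt{n_2n_4}$ in which $k$ must lie.

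However, the specific mechanism you propose for obtaining the divisibility has a gap. You write that, after substituting $a_1=-4+n_1x$ and reducing, one gets $(2q+a_1)^2\equiv 4N\equiv 0$ modulo high powers. This is not correct: using only congruences~\eqref{congthree} and~\eqref{congfour} one finds $a_1^2-4a_2-4a_1-4\equiv 4+n_1^2x^2\pmod{n_1^2n_2}$, so $(2q+a_1)^2\equiv 4\pmod{n_1^2}$, not $0$. Worse, even a congruence of the form $(2q+a_1)^2\equiv 4\pmod{M}$ only gives $(2q+a_1-2)(2q+a_1+2)\equiv 0\pmod M$, which for composite $M$ does not force either factor to vanish modulo $M$. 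So squaring loses exactly the information you need.

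The paper instead works linearly. From linear combinations of \eqref{congone}--\eqref{congthree} it extracts $a_1\equiv -3-q^2\pmod{n_1^2n_2}$ and then, crucially, proves $n_1^2n_2\mid(q-1)^2$ via a prime-by-prime valuation argument using congruence~\eqref{congtwo} (the one involving $q$). These two facts together yield $a_1\equiv -2(q+1)\pmod{n_1^2n_2}$ directly, without any square-root extraction. Once you have $a_1=kn_1^2n_2-2(q+1)$, the bound $|a_1|<4\sqrt q$ and the Weil bound on $N$ place $k$ in the interval $\bigl(2n_3\sqrt{n_2n_4}(\sqrt q-1)^2/(\sqrt q+1)^2,\ 2n_3\sqrt{n_2n_4}(\sqrt q+1)^2/(\sqrt q-1)^2\bigr)$, and an elementary computation shows this interval contains no integer once $\sqrt q\ge 10n_3\sqrt{n_2n_4}/\delta$. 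Your identification of R\"uck~(b.) for the case $2n_3\sqrt{n_2n_4}\in\Z$ is exactly right: in that case one checks $a_1^2-4a_2+8q=0$, a square, contradicting simplicity. So the fix is to replace the completing-the-square step with the linear derivation of $a_1\equiv -2(q+1)\pmod{n_1^2n_2}$, and in particular to supply the argument that $n_1^2n_2\mid(q-1)^2$, which is the one genuinely nontrivial ingredient you are missing.
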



Theorems \ref{splitbound} and \ref{splitbound-average} will follow from Proposition \ref{key}.
Theorem~\ref{splitbound} follows by specifying lower bounds for $\| \sqrt{m} \|$ that are valid for every integer $m$.
For the details, see Section~\ref{DNE}.
Theorem~\ref{splitbound-average} follows from the fact that the triple sequence $2n_3\sqrt{n_2n_4}$ is uniformly distributed modulo one; we will prove this in Section~\ref{UD}.

First, we use the congruences of Corollary~\ref{congs} to derive a simpler congruence on $a_1$.
\begin{lemma}\label{derekcongs}
Suppose that $q$, $a_1$, $a_2$, $n_1$, $n_2$, $n_3$, and $n_4$ satisfy~\eqref{congone}--\eqref{congfour}.
Then
\begin{equation*}
a_1 \equiv -2(q+1)\pmod{n_1^2n_2}.
\end{equation*}
\end{lemma}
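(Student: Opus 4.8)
The plan is to eliminate the auxiliary quantity $a_2$, together with the cross term $qa_1$, from the three congruences of Corollary~\ref{congs}, by passing to the single modulus $M:=n_1^2n_2$ and taking one carefully chosen linear combination.

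First I would note that $M$ divides both $n_1^4n_2^3n_3^2n_4$ and $n_1^3n_2^2n_3$, so that \eqref{congone} and \eqref{congtwo} may each be read modulo $M$: \eqref{congone} gives $q^2+a_1q+a_2+a_1+1\equiv 0\pmod{M}$, while \eqref{congtwo} gives $4+3a_1+2a_2+qa_1\equiv 0\pmod{M}$. Reading \eqref{congthree} as $a_2\equiv-6-3a_1\pmod{M}$ and substituting into both of these, I would obtain
\[
q^2+a_1(q-2)-5\equiv 0\pmod{M}
\qquad\text{and}\qquad
a_1(q-3)-8\equiv 0\pmod{M} .
\]
The key point is that these two congruences agree in the coefficient of $a_1$ up to the additive constant $1$, so subtracting the second from the first cancels the term $qa_1$ and isolates $a_1$ modulo $M$ in terms of $q$ alone; a short rearrangement of the resulting identity then produces the stated congruence for $a_1$ modulo $n_1^2n_2$.

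I do not expect any genuine obstacle here: once the correct combination is identified, the proof is bookkeeping. The two things requiring care are (i) checking the divisibilities that justify reducing \eqref{congone} and \eqref{congtwo} modulo $M$, and (ii) arranging the linear combination so that the cross term $qa_1$ cancels exactly --- which is precisely why one subtracts the two ``shifted'' congruences $a_1(q-2)\equiv\cdots$ and $a_1(q-3)\equiv\cdots$ rather than combining \eqref{congtwo} with \eqref{congthree} directly. I expect that \eqref{congfour} will not be needed in the argument.
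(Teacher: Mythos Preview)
Your linear combination is correct up to the point where you subtract, but the result of that subtraction is
\[
q^2 + a_1(q-2) - 5 \;-\; \bigl(a_1(q-3) - 8\bigr) \;=\; q^2 + a_1 + 3 \;\equiv\; 0 \pmod{M},
\]
i.e.\ $a_1 \equiv -q^2 - 3 \pmod{n_1^2 n_2}$. This is \emph{not} the target congruence $a_1 \equiv -2(q+1)$; the two differ by exactly $(q-1)^2$. So the ``short rearrangement'' you invoke is in fact the assertion
\[
n_1^2 n_2 \,\bigm|\, (q-1)^2,
\]
and this is the whole content of the lemma. It cannot be extracted from any combination of \eqref{congone}--\eqref{congthree} taken only modulo $M = n_1^2 n_2$: once you reduce \eqref{congone} and \eqref{congtwo} to modulus $M$ you have thrown away precisely the information needed.

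The paper's proof proceeds as you do to obtain $a_1 + q^2 + 3 \equiv 0 \pmod{n_1^2 n_2}$, but then returns to the \emph{stronger} modulus $n_1^3 n_2^2$ on \eqref{congone} and \eqref{congtwo}: combining $2\cdot\eqref{congone} - \eqref{congtwo}$ there gives $2q^2 + a_1 q - a_1 - 2 \equiv 0 \pmod{n_1^3 n_2^2}$, and substituting $a_1 = -3 - q^2 + k n_1^2 n_2$ yields
\[
k n_1^2 n_2 (q-1) - (q-1)^3 \equiv 0 \pmod{n_1^3 n_2^2}.
\]
A prime-by-prime valuation argument then forces $n_1^2 n_2 \mid (q-1)^2$. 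Your plan to work entirely modulo $n_1^2 n_2$ therefore stops exactly where the real work begins; the claim that the rest is ``bookkeeping'' is where the gap lies. (Your expectation that \eqref{congfour} is unneeded is, however, correct.)
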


\begin{proof}
Reducing \eqref{congone} and \eqref{congtwo} modulo $n_1^3n_2^2$ yields
\begin{align}
 q^2+a_1q+a_2+a_1+1&\equiv 0\pmod{n_1^3n_2^2}\label{reduced1},\\
4+3a_1+2a_2+qa_1&\equiv 0\pmod{n_1^3n_2^2}\label{reduced2}.
\end{align}
Reducing the above congruences modulo $n_1^2n_2$ and taking their difference gives
\begin{equation*}
2a_1+a_2+3-q^2\equiv0\pmod{n_1^2n_2}.
\end{equation*}
Subtracting this from \eqref{congthree}, we obtain
\begin{equation}
a_1+3+q^2\equiv 0\pmod{n_1^2n_2}\label{congsix}.
\end{equation}
The remainder of the proof is devoted to showing that $n_1^2n_2\mid (q-1)^2$,
which together with~\eqref{congsix} implies the desired congruence $a_1\equiv -2(q+1)\pmod{n_1^2n_2}$.

Taking twice \eqref{reduced1} and subtracting off \eqref{reduced2} yields
\begin{equation}\label{congnine}
2q^2+a_1q-a_1-2\equiv 0\pmod{n_1^3n_2^2}.
\end{equation}
From \eqref{congsix} we know that there is an integer $k$ such that $a_1=-3-q^2+kn_1^2n_2$.
After some slight rearrangement, plugging this expression for $a_1$ into~\eqref{congnine} gives
\begin{equation}
kn_1^2n_2(q-1)-(q-1)^3\equiv 0\pmod{n_1^3n_2^2}
\label{congeleven}.
\end{equation}

Working prime by prime, we will show that~\eqref{congeleven} implies that $n_1^2n_2\mid (q-1)^2$.
To this end let $\ell$ be an arbitrary prime, and suppose that $\nu_\ell(n_1^2n_2)=r$.
Then we want to show that $\nu_\ell((q-1)^2)\ge r$.
Assume for the sake of contradiction that $\nu_\ell((q-1)^2)<r$.
Since $\nu_\ell(kn_1^2n_2)\ge r$, it follows that $\nu_\ell(kn_1^2n_2-(q-1)^2)=\nu_\ell((q-1)^2)$, and hence
\begin{equation*}
\nu_\ell(kn_1^2n_2(q-1)-(q-1)^3)
=\nu_\ell(q-1)+\nu_\ell(kn_1^2n_2-(q-1)^2)
=3\nu_\ell(q-1)
<\frac{3r}{2}.
\end{equation*}
On the other hand since $n_1^3n_2^2$ divides $kn_1^2n_2(q-1)-(q-1)^3$, it follows that
\begin{equation*}
3\nu_\ell(n_1)+2\nu_\ell(n_2)\le\nu_\ell(kn_1^2n_2(q-1)-(q-1)^3)<\frac{3r}{2}=\frac{3}{2}(2\nu_\ell(n_1)+\nu_\ell(n_2)).
\end{equation*}
However, this implies that $\nu_\ell(n_2)<0$, which is impossible since $n_2$ is an integer.
\end{proof}

\begin{lemma}\label{k-interval lemma}
Suppose that $A/\F_q$ is a simple abelian surface, $\mathrm{End}_{\F_q}(A)\otimes\Q$ is a field, and $A(\F_q)\simeq G(n_1,n_2,n_3,n_4)$.
If $f_A(T) =T^4+a_1T^3+a_2T^2+qa_1T+q^2$ is the characteristic polynomial of $A/\F_q$, then there exists an integer $k$ such that
\begin{equation*}
a_1=kn_1^2n_2-2(q+1)
\end{equation*}
and
\begin{equation}\label{k-interval}
 2n_3\sqrt{n_2n_4}\left(\frac{\sqrt{q}-1}{\sqrt{q}+1}\right)^2<k < 2n_3\sqrt{n_2n_4}\left(\frac{\sqrt{q}+1}{\sqrt{q}-1}\right)^2.
 \end{equation}
\end{lemma}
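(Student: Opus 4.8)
The plan is to use Lemma~\ref{derekcongs} to write $a_1 = kn_1^2n_2 - 2(q+1)$ for some integer $k$, and then to translate the Hasse--Weil bounds~\eqref{hasse-weil} into bounds on $k$. First I would substitute this expression for $a_1$ into the point-count formula~\eqref{congone}, namely $n_1^4n_2^3n_3^2n_4 = f_A(1) = q^2 + a_1q + a_2 + a_1 + 1$. This determines $a_2$ in terms of $k$, $q$, and the $n_i$; but rather than solving for $a_2$ explicitly, the cleaner route is to note that $f_A(1) = \#A(\F_q)$ is fixed (it equals $n_1^4n_2^3n_3^2n_4$) and instead work directly with the factorization of $f_A$ over $\C$. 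Writing the roots of $f_A$ as $\pi, \bar\pi, \pi', \bar\pi'$ with $|\pi| = |\pi'| = \sqrt q$, we have $f_A(1) = |1-\pi|^2|1-\pi'|^2$, and each factor $|1-\pi|^2$ lies in the interval $[(\sqrt q - 1)^2, (\sqrt q + 1)^2]$.

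The key identity to exploit is that $a_1 = -(\pi + \bar\pi + \pi' + \bar\pi')$ is (minus) the sum of the traces, so $a_1 + 2(q+1)$ relates to the ``$f_A(1)$-normalized'' quantity. More precisely, I would use that $f_A(1) = q^2 + a_1 q + a_2 + a_1 + 1$ together with $f_A(-1)$-type or resultant manipulations; the relevant combination is $(\sqrt q \mp 1)^4 \le f_A(1) \le (\sqrt q \pm 1)^4$ is too crude. Instead, since $f_A(1) = n_1^4 n_2^3 n_3^2 n_4$, and $kn_1^2n_2 = a_1 + 2(q+1)$, I want to express $k$ via $k = (a_1 + 2(q+1))/(n_1^2 n_2)$ and bound $a_1 + 2(q+1)$. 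Here the slick observation is: if $\beta_1 = |1-\pi|^2$ and $\beta_2 = |1-\pi'|^2$, then $\beta_1 + \beta_2 = f_A(1)/\text{(something)} + \ldots$; in fact $\beta_1 \beta_2 = f_A(1)$ and $\beta_1 + \beta_2 = 2 + 2q + a_1 \cdot(\ldots)$. Working this out, $\beta_1 + \beta_2 = f_A(1)\big(\tfrac1{\beta_1} + \tfrac1{\beta_2}\big)$, and since each $\beta_i \in [(\sqrt q-1)^2,(\sqrt q+1)^2]$, we get $\beta_1 + \beta_2 \le f_A(1)\cdot \tfrac{2}{(\sqrt q - 1)^2}$ and $\beta_1+\beta_2 \ge f_A(1)\cdot\tfrac{2}{(\sqrt q+1)^2}$. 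Combining with $a_1 + 2(q+1) = \beta_1 + \beta_2 - \text{(a correction)}$ — one checks $a_1 + 2q + 2 = \beta_1 + \beta_2$ directly by expanding $|1-\pi|^2 = 1 - (\pi+\bar\pi) + q$ — yields
\[
\frac{2 f_A(1)}{(\sqrt q + 1)^2} < a_1 + 2(q+1) < \frac{2 f_A(1)}{(\sqrt q - 1)^2},
\]
where strictness comes from condition (b.) in Theorem~\ref{ruckthm} forcing $\beta_1 \ne \beta_2$ so the endpoints cannot be attained simultaneously. Dividing through by $n_1^2 n_2$ and using $f_A(1) = n_1^4n_2^3n_3^2n_4$ so that $f_A(1)/(n_1^2n_2) = n_1^2 n_2^2 n_3^2 n_4$, we would get $k$ sandwiched between $2n_1^2n_2^2n_3^2n_4/(\sqrt q+1)^2$ and $2n_1^2n_2^2n_3^2n_4/(\sqrt q-1)^2$.

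That is not yet~\eqref{k-interval}, so the final step is to eliminate $q$: from $f_A(1) = n_1^4n_2^3n_3^2n_4$ and~\eqref{hasse-weil}, $(\sqrt q - 1)^4 \le n_1^4n_2^3n_3^2n_4 \le (\sqrt q + 1)^4$, i.e. $(\sqrt q - 1)^2 \le n_1^2 n_2^{3/2} n_3 n_4^{1/2} \le (\sqrt q + 1)^2$. Substituting the outer of these two inequalities into the bounds on $k$ — using $(\sqrt q + 1)^2 \ge n_1^2n_2^{3/2}n_3n_4^{1/2}$ for the lower bound on $k$ and $(\sqrt q - 1)^2 \le n_1^2n_2^{3/2}n_3n_4^{1/2}$ for the upper bound — gives
\[
k > \frac{2n_1^2n_2^2n_3^2n_4}{n_1^2n_2^{3/2}n_3n_4^{1/2}} = 2n_3 n_2^{1/2} n_4^{1/2} = 2n_3\sqrt{n_2 n_4}
\]
and symmetrically $k < 2n_3\sqrt{n_2n_4}$, which collapses the interval to a point and is clearly wrong — so the correct move is to keep the ratio $(\sqrt q + 1)/(\sqrt q - 1)$ intact. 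That is, from the two-sided bound on $k$ one has $k > \tfrac{2 f_A(1)}{n_1^2n_2(\sqrt q+1)^2}$ and from Hasse--Weil $(\sqrt q + 1)^2 \le n_1^2n_2^{3/2}n_3n_4^{1/2}\cdot\big(\tfrac{\sqrt q+1}{\sqrt q - 1}\big)^2$, and substituting this upper bound for $(\sqrt q+1)^2$ into the denominator gives exactly $k > 2n_3\sqrt{n_2n_4}\big(\tfrac{\sqrt q-1}{\sqrt q+1}\big)^2$; the upper bound on $k$ is handled the same way with the roles of $\sqrt q \pm 1$ swapped. The main obstacle I anticipate is purely bookkeeping: getting the identity $a_1 + 2q + 2 = \beta_1 + \beta_2$ (equivalently, recognizing $f_A(1)/(n_1^2n_2) = n_1^2n_2^2n_3^2n_4$ and that the ``correction term'' vanishes because of the palindromic shape $a_1 T^3 + a_1 q T$ of $f_A$) right, and being careful that all four inequalities are strict — which, as noted, follows because Theorem~\ref{ruckthm}(b.) prevents $\pi, \pi'$ from giving equal contributions and Theorem~\ref{ruckthm}(a.) gives $|a_1| < 4\sqrt q$ so $q \ne 1$ and the ratios are well-defined and $>1$.
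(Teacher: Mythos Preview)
Your approach is correct and reaches the same conclusion, but it is considerably more circuitous than the paper's argument. The paper simply invokes the bound $|a_1|<4\sqrt q$ directly from Theorem~\ref{ruckthm}(a.), which immediately gives
\[
2(\sqrt q-1)^2<kn_1^2n_2<2(\sqrt q+1)^2,
\]
and then applies the Weil bound~\eqref{hasse-weil} in the form $(\sqrt q-1)^2/(n_3\sqrt{n_2n_4})\le n_1^2n_2\le(\sqrt q+1)^2/(n_3\sqrt{n_2n_4})$ to obtain~\eqref{k-interval}. Your route---factoring $f_A(1)=\beta_1\beta_2$ with $\beta_i=|1-\pi_i|^2$, using the identity $\beta_1+\beta_2=a_1+2(q+1)$, and then bounding $\beta_1+\beta_2=f_A(1)(1/\beta_1+1/\beta_2)$---amounts to rederiving a variant of $|a_1|<4\sqrt q$ from the Weil numbers, which is work you do not need to do since R\"uck's theorem already hands it to you. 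You even cite $|a_1|<4\sqrt q$ at the end, but only to check $q\ne 1$; using it from the start eliminates the false start and the extra bookkeeping.

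One small correction: your justification for strict inequalities via Theorem~\ref{ruckthm}(b.) (``$\beta_1\ne\beta_2$'') is not the right condition. Strictness of $\beta_i<(\sqrt q+1)^2$ and $\beta_i>(\sqrt q-1)^2$ is equivalent to $\pi_i\ne\pm\sqrt q$, which holds because $f_A$ is irreducible of degree~$4$ (equivalently, from the strict inequality $|a_1|<4\sqrt q$ in Theorem~\ref{ruckthm}(a.)). Condition~(b.) rules out a different degeneracy.
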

\begin{proof}
By Theorem~\ref{ruckthm}(a), we know that $-4\sqrt{q} < a_1 < 4\sqrt{q}$.
By Lemma~\ref{derekcongs}, there exists an integer $k$ such that $a_1=n_1^2n_2k-2(q+1)$.
Substituting this into the bounds for $a_1$ and adding $2(q+1)$ to each side of the inequalities yields
\begin{equation*}
2q-4\sqrt{q} +2< n_1^2n_2k < 2q+4\sqrt{q}+2.
\end{equation*}
Factoring and dividing through by $n_1^2n_2$ allows us to obtain
\begin{equation*}
\frac{2(\sqrt{q} -1)^2}{n_1^2n_2}<k <\frac{2(\sqrt{q}+1)^2}{n_1^2n_2}.
\end{equation*}
Since $\#A(\F_q) = n_1^4n_2^3n_3^2n_4$, the Weil bound~\eqref{hasse-weil} implies that
\begin{equation*}
\frac{(\sqrt{q}-1)^2}{n_3\sqrt{n_2n_4}}\le n_1^2n_2\le\frac{(\sqrt{q}+1)^2}{n_3\sqrt{n_2n_4}}.
\end{equation*}
Together these bounds imply~\eqref{k-interval}.
\end{proof}


For $q$ large enough, the interval from Lemma~\ref{k-interval lemma} will contain at most one integer $k$.
The following lemma makes this statement precise.
Recall the definition of $\delta$ given by~\eqref{delta}.



\begin{lemma}\label{shrink k-interval}
If $\sqrt q\ge 10n_3\sqrt{n_2n_4}/\delta$, then the interval~\eqref{k-interval} contains no integral values of $k$ unless
$2n_3\sqrt{n_2n_4}$ is an integer, in which case $k=2n_3\sqrt{n_2n_4}$.
\end{lemma}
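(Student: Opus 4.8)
The plan is to abbreviate $L=2n_3\sqrt{n_2n_4}$ and $\rho=\bigl((\sqrt q+1)/(\sqrt q-1)\bigr)^2$, so that the interval~\eqref{k-interval} is the open interval $\bigl(L/\rho,\ L\rho\bigr)$. Since $\delta\le 1$ and $n_2,n_3,n_4\ge 1$, the hypothesis forces $\sqrt q\ge 10n_3\sqrt{n_2n_4}/\delta=5L/\delta\ge 10>1$, so $\rho>1$ and the interval always contains $L$ itself. The assertion of the lemma is thus exactly that $L$ is the only integer this interval can contain, and that it lies in the interval precisely when $L\in\Z$.

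First I would make the endpoints explicit by the elementary computation
\begin{equation*}
\rho-1=\frac{4\sqrt q}{(\sqrt q-1)^2},\qquad 1-\frac1\rho=\frac{4\sqrt q}{(\sqrt q+1)^2},
\end{equation*}
so that the interval equals $\bigl(L-\tfrac{4L\sqrt q}{(\sqrt q+1)^2},\ L+\tfrac{4L\sqrt q}{(\sqrt q-1)^2}\bigr)$. In particular, any integer $k$ in it satisfies $|k-L|<\tfrac{4L\sqrt q}{(\sqrt q-1)^2}$, the larger of the two one-sided gaps about $L$.

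Next I would split into cases according to whether $L$ is an integer. If $L\in\Z$, then $\delta=1$, so the hypothesis reads $\sqrt q\ge 5L$; since $L\ge 2$ (as $n_2,n_3,n_4\ge 1$) this also gives $\sqrt q\ge 10$, whence $(\sqrt q-1)^2\ge\tfrac{81}{100}q$ and $(\sqrt q+1)^2\ge q$, and a one-line estimate shows that both one-sided gaps are strictly less than $1$. Hence $L-1<L/\rho<L<L\rho<L+1$, so $L$ is the unique integer in the open interval, as claimed. If $L\notin\Z$, then $\delta=\|L\|\le\tfrac12$, so the hypothesis gives $\sqrt q\ge 10L\ge 20$ and therefore $(\sqrt q-1)^2\ge\tfrac{361}{400}q$. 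Were there an integer $k$ in the interval, then $k\ne L$, so $|k-L|\ge\|L\|=\delta$; but
\begin{equation*}
|k-L|<\frac{4L\sqrt q}{(\sqrt q-1)^2}\le\frac{400}{361}\cdot\frac{4L}{\sqrt q}\le\frac{400}{361}\cdot\frac{4\delta}{5}=\frac{320}{361}\,\delta<\delta,
\end{equation*}
a contradiction; so the interval contains no integer.

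The only point requiring care is the bookkeeping of constants: each estimate of the form $(\sqrt q-1)^2\ge c\,q$ needs a numerical lower bound on $q$, which is extracted from $\sqrt q\ge 5L/\delta$ via $n_3\sqrt{n_2n_4}\ge 1$ (so $L\ge 2$) and, in the non-integer case, $\delta\le\tfrac12$. The slack in the displayed inequalities is comfortable, and the non-integer case is exactly where the constant $10$ in the hypothesis is used; there is no deeper difficulty.
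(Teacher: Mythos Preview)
Your proof is correct and follows essentially the same approach as the paper: both set $m=L=2n_3\sqrt{n_2n_4}$, compute the one-sided gap $L\rho-L=\tfrac{4L\sqrt q}{(\sqrt q-1)^2}$, and show this is at most $\delta$ under the hypothesis $\sqrt q\ge 5L/\delta$, so that the interval lies inside $(L-\delta,L+\delta)$. The only cosmetic difference is that the paper verifies the last inequality by solving the resulting quadratic in $\sqrt q$ exactly, whereas you split into the cases $L\in\Z$ and $L\notin\Z$ and use the numerical lower bounds $(\sqrt q-1)^2\ge\tfrac{81}{100}q$ (resp.\ $\tfrac{361}{400}q$); either route works with the constant $10$ in the hypothesis.
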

\begin{proof}
To further ease notation, let $m=2n_3\sqrt{n_2n_4}$.
Note that the interval $(m-\delta,m+\delta)$ does not contain an integer unless $m=2n_3\sqrt{n_2n_4}$ is itself an integer,
in which case it is the only such integer.
Since
\begin{equation*}
\left(\frac{\sqrt{q}+1}{\sqrt{q}-1}\right)^2=1+\frac{4\sqrt q}{(\sqrt q- 1)^2}
\quad\text{and}\quad
\left(\frac{\sqrt{q}-1}{\sqrt{q}+1}\right)^2=1-\frac{4\sqrt q}{(\sqrt q+ 1)^2},
\end{equation*}
it follows that the interval~\eqref{k-interval} is contained in the interval $(m-\delta,m+\delta)$ if and only if
\begin{equation*}
m\frac{4\sqrt q}{(\sqrt q -1)^2}\le \delta.
\end{equation*}
Factoring the latter inequality and dividing by $\delta$ yields
\begin{equation}\label{quadratic}
0\le \left(\sqrt q-\frac{2m+\delta-2\sqrt{m^2+m\delta}}{\delta}\right) \left(\sqrt q-\frac{2m+\delta+2\sqrt{m^2+m\delta}}{\delta}\right).
\end{equation}
Now, since
\[
\frac{2m+\delta-2\sqrt{m^2+m\delta}}{\delta}
\le1,
\]
it follows that~\eqref{quadratic} holds if and only if
\[
\sqrt{q}\ge\frac{2m+\delta+2\sqrt{m^2+m\delta}}{\delta}.
\]
However,
\begin{equation*}
\frac{2m+\delta+2\sqrt{m^2+m\delta}}{\delta}
\le\frac{2m+1+2\sqrt{m^2+m+1/4}}{\delta}
=\frac{4m+2}{\delta}
\le\frac{5m}{\delta},
\end{equation*}
and so~\eqref{quadratic} holds if $\sqrt q\ge 5m/\delta=10n_3\sqrt{n_2n_4}/\delta$.
\end{proof}

\begin{proof}[Proof of Proposition \ref{key}]
First, suppose that $k=2n_3\sqrt{n_2n_4}$ is an integer and
\begin{equation*}
a_1=kn_1^2n_2-2(q+1)=2n_1^2n_2^{3/2}n_3n_4^{1/2}-2(q+1).
\end{equation*}
Then substitution into~\eqref{congone} gives
\begin{equation*}
a_2=n_1^4n_2^3n_3^2n_4-1-\left(2n_1^2n_2^{3/2}n_3n_4^{1/2}-2(q+1)\right)(q+1)-q^2.
\end{equation*}
Under these assumptions, we then find that
\begin{equation*}
a_1^2-4a_2+8q=0.
\end{equation*}
According to Theorem~\ref{ruckthm}(b), this contradicts the assumption that $\mathrm{End}_{\F_q}\otimes\Q$ is field.
Therefore, by Lemmas~\ref{k-interval lemma} and~\ref{shrink k-interval},
regardless of whether $2n_3\sqrt{n_2n_4}$ is an integer, we see that if $A(\F_q)\simeq G(n_1,n_2,n_3,n_4)$ and
$\mathrm{End}_{\F_q}\otimes\Q$ is a field, then
$\sqrt q< 10n_3\sqrt{n_2n_4}/\delta$.
Using this together with the Weil bound~\eqref{hasse-weil}, we have that
\begin{equation*}
n_1 n_2^{3/4} n_3^{2/4} n_4^{1/4}\le \sqrt{q}+1<10n_3\sqrt{n_2n_4}/\delta+1.
\end{equation*}
Whence,
\begin{equation*}
n_1<\frac{10n_3^{1/2}n_4^{1/4}}{\delta n_2^{1/4}}+\frac{1}{n_2^{3/4}n_3^{1/2}n_4^{1/4}}.
\end{equation*}
\end{proof}


\section{Proof of Theorem~\ref{splitbound}}\label{DNE}

The proof of Theorem \ref{splitbound} can be deduced from the following simple observation, which gives a lower bound for
$\| 2 n_3\sqrt{n_2n_4} \|$.
As usual, for any real number $x$, we write $[x]$ for the largest integer
less than or equal to $x$, and $\{ x \} = x - [x]$ for the fractional part of $x$.

\begin{lemma}\label{squareroot}
  Let $m$ be an integer that is not a perfect square. Then
  $$\| \sqrt{m} \| >
  \frac{1}{3 \sqrt{m}}.$$
\end{lemma}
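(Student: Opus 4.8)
The plan is to estimate how far $\sqrt{m}$ can be from the nearest integer by comparing $m$ with the square of that integer. Write $n = [\sqrt{m} + 1/2]$ for the nearest integer to $\sqrt{m}$, so that $\|\sqrt{m}\| = |\sqrt{m} - n|$. Since $m$ is not a perfect square, $n^2 \ne m$, so $|m - n^2| \ge 1$. The key identity is the factorization $|m - n^2| = |\sqrt{m} - n|\,(\sqrt{m} + n)$, which rearranges to
\begin{equation*}
\|\sqrt{m}\| = |\sqrt{m} - n| = \frac{|m - n^2|}{\sqrt{m} + n} \ge \frac{1}{\sqrt{m} + n}.
\end{equation*}
So it suffices to show $\sqrt{m} + n < 3\sqrt{m}$, i.e.\ $n < 2\sqrt{m}$.

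The bound $n < 2\sqrt{m}$ is immediate once one checks $m$ is large enough that $n$ does not overshoot: since $n$ is the nearest integer to $\sqrt m$ we have $n \le \sqrt{m} + 1/2$, and $\sqrt m + 1/2 < 2\sqrt m$ precisely when $\sqrt m > 1/2$, i.e.\ $m \ge 1$. (For $m = 1$ the hypothesis fails since $1$ is a perfect square, so in fact $m \ge 2$ and the inequality $n < 2\sqrt m$ is comfortable.) Combining, $\|\sqrt{m}\| > \frac{1}{3\sqrt m}$, which is the claim. I would double-check the one genuinely delicate point, namely the treatment of the nearest integer: one must be careful that $\|\sqrt m\|$ really equals $|\sqrt m - n|$ and not $1 - |\sqrt m - n|$, but this holds by the definition of $n$ as the closest integer, and the case $\sqrt m$ exactly half-integral cannot occur for integer $m \ge 2$.

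The only step that requires any care at all is making sure the strict inequality in the conclusion survives; since $|m - n^2| \ge 1$ is strict improvement over what is needed (we only need $\ge 1$, and $\sqrt m + n < 3\sqrt m$ is strict for $m \ge 2$), the final inequality is strict as stated. No deeper input is needed — this is a one-line estimate dressed up, and the rest of Section~\ref{DNE} will feed it, together with Proposition~\ref{key}, into the proof of Theorem~\ref{splitbound} by taking $m = 4 n_3^2 n_2 n_4$ when $2n_3\sqrt{n_2 n_4}$ is not an integer.
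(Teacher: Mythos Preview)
Your proof is correct and uses essentially the same idea as the paper: compare $m$ with the square of a nearby integer and use the factorization $m-n^2=(\sqrt m-n)(\sqrt m+n)$ together with $|m-n^2|\ge 1$. The paper carries this out in two cases, bounding $\{\sqrt m\}$ and $1-\{\sqrt m\}$ separately via $n=[\sqrt m]$ and $n=[\sqrt m]+1$, whereas you streamline the argument by working directly with the single nearest integer $n$; the content is the same.
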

\begin{proof}
Since $\sqrt m=[\sqrt m]+\{\sqrt m\}$,
upon squaring both sides, we find that
\begin{equation*}
\begin{split}
m
&=[\sqrt m]^2+2\{\sqrt m\}[\sqrt m]+\{\sqrt m\}^2\\
&=[\sqrt m]^2+(\sqrt m+[\sqrt m])\{\sqrt m\}\\
&\le[\sqrt m]^2+2\sqrt m\{\sqrt m\}.
\end{split}
\end{equation*}
Therefore, since $1 < m-[\sqrt m]^2$, we have $\{\sqrt m\}>1/2\sqrt m$.

Similarly, since we can write $\sqrt m=[\sqrt m]+1-(1-\{\sqrt m\})$, we have that
\begin{equation*}
\begin{split}
m
&=([\sqrt m]+1)^2-2(1-\{\sqrt m\})([\sqrt m]+1)+(1-\{\sqrt m\})^2\\
&=([\sqrt m]+1)^2-(\sqrt m+[\sqrt m]+1)(1-\{\sqrt m\})\\
&\ge([\sqrt m]+1)^2-(2\sqrt m+1)(1-\{\sqrt m\}).
\end{split}
\end{equation*}
Therefore, since $1\le ([\sqrt m]+1)^2-m$, we obtain $1-\{\sqrt m\}\ge1/(2\sqrt m+1)>1/3\sqrt m$.
\end{proof}


\begin{proof}[Proof of Theorem \ref{splitbound}]
By Proposition~\ref{key}, if there is a prime power $q$ and a simple abelian surface $A/\F_q$ with group $G(n_1, n_2, n_3, n_4)$, then
\begin{equation*}
 n_1<\frac{10n_3^{1/2}n_4^{1/4}}{\delta n_2^{1/4}}+\frac{1}{n_2^{3/4}n_3^{1/2}n_4^{1/4}}
\le\frac{10n_3^{1/2}n_4^{1/4}}{\delta n_2^{1/4}}+1,
\end{equation*}
where $\delta$ is as defined by~\eqref{delta}.
By Lemma \ref{squareroot},
\begin{eqnarray*}
n_1 < 60 n_2^{1/4} n_3^{3/2} n_4^{3/4} + 1
\end{eqnarray*}
since $\delta \ge (6 n_3 \sqrt{n_2 n_4})^{-1}$.
\end{proof}

\section{Proof of Theorem~\ref{splitbound-average}}\label{UD}

In this section we use the standard notation $f\ll g$ to mean that there exists a positive constant $c$ such that $|f|\le cg$.
We also use the notation $n\asymp N$ (in a somewhat nonstandard way) as a shorthand for $N\le n\le 2N$.

To prove Theorem \ref{splitbound-average}, we use the fact that for most triples of integers $(n_2,n_3,n_4)$ with $n_j\asymp N_j$ ($2\le j\le 4$),
the distance between $2 n_3 \sqrt{n_2 n_4}$ and the nearest integer is larger than any function tending to zero as $N_2N_4\rightarrow\infty$.
This follows from the uniform distribution of $2 n_3 \sqrt{n_2 n_4}$ modulo one; see Theorem \ref{thm-uniform-dist-f} below.
For the sake of completeness, we review much of the relevant material here.


Let
\begin{equation*}
 \mathcal{T}(N_2, N_3, N_4) = \left\{ (n_2, n_3, n_4) : n_2 \asymp N_2,  n_3 \asymp N_3, n_4 \asymp N_4\right\},
\end{equation*}
and let $\left\{ f(n_2,n_3,n_4): n_2, n_3, n_4\ge 1\right\}$
 be any triply indexed sequence of real numbers.
For $0 \leq \alpha < \beta \leq 1$, let
\begin{eqnarray*}
Z_f(N_2,N_3,N_4; \alpha, \beta) = \# \left\{  (n_2, n_3, n_4) \in \mathcal{T}(N_2, N_3, N_4) : \alpha \leq \left\{ f(n_2,n_3,n_4) \right\} \leq \beta \right\},
\end{eqnarray*}
where, as in the previous section, $\left\{ f(n_2,n_3,n_4) \right\}$ denotes the fractional part of $f(n_2, n_3, n_4)$.
We say that the sequence $f(n_2,n_3,n_4)$  is uniformly distributed modulo one if
\begin{eqnarray*}
\lim_{N_2,N_3,N_4 \rightarrow \infty} \frac{Z_f(N_2,N_3, N_4; \alpha, \beta)}{N_2N_3N_4}  = \beta - \alpha.
\end{eqnarray*}
By Weyl's criterion, this is equivalent to showing that
\begin{eqnarray*}
E_k(N_2, N_3, N_4) :=
\sum_{\substack{n_2\asymp N_2,\\ n_3\asymp N_3,\\ n_4\asymp N_4}}
e \left( k f(n_2,n_3,n_4) \right) = \underline{o}
\left( N_2 N_3 N_4 \right)
\end{eqnarray*}
for every integer $k \neq 0$. As usual, we have written $e ( x ) = e^{2 \pi i x}.$
We can put this equivalence in quantitative form using the Selberg polynomials.
This is explained in Chapter 1 of \cite{Mont} for a sequence of one variable.
The proof for a sequence of three variables $f(n_2,n_3,n_4)$ follows along the same lines.
The next theorem is then the analogue of \cite[Chapter 1, Theorem 1]{Mont} for triple sequences.


\begin{thm} \label{thm-montgomery}
Let $f(n_2,n_3,n_4)$ be a sequence of real numbers, and let $0 \leq \alpha \leq \beta \leq 1$.
Then
\begin{multline}
\left| Z_f(N_2,N_3, N_4; \alpha, \beta) - (\beta-\alpha) \;{\# \mathcal{T}(N_2, N_3, N_4)}  \right| \\
\label{eq-thm-montgomery}
\quad \leq
\frac{\# \mathcal{T}(N_2, N_3, N_4)}{K+1} + 2 \sum_{k=1}^K \left( \frac{1}{K+1} + \min{\left( \beta-\alpha, \frac{1}{\pi k} \right)}
\right)
\left| E_k(N_2,N_3,N_4) \right|
\end{multline}
for any positive integers $N_2, N_3, N_4$, and $K$.
\end{thm}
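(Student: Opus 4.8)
The plan is to adapt the classical one-variable discrepancy estimate via Selberg polynomials (as in \cite[Chapter 1, Theorem 1]{Mont}) to the triple-indexed setting, where the ``interval'' $\mathcal{T}(N_2,N_3,N_4)$ plays the role of the range of summation. The underlying principle is unchanged: one approximates the indicator function of the arc $[\alpha,\beta]\pmod 1$ from above and below by trigonometric polynomials of bounded degree $K$ whose Fourier coefficients are well controlled, then sums these approximations over $\mathcal{T}(N_2,N_3,N_4)$ and reads off the stated bound. The only genuinely ``multivariable'' feature is that the exponential sums appearing are the $E_k(N_2,N_3,N_4)$ defined in the excerpt, i.e.\ triple sums of $e(kf(n_2,n_3,n_4))$; everything else is bookkeeping with a single index $k$ running over the degrees of the approximating polynomials.

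Concretely, I would proceed as follows. First, recall the Selberg construction: for the arc $I=[\alpha,\beta]$ there exist trigonometric polynomials $S^+_K$ and $S^-_K$ of degree at most $K$ with $S^-_K(x)\le \mathbf 1_I(x)\le S^+_K(x)$ for all $x$, with zeroth Fourier coefficient equal to $(\beta-\alpha)\pm 1/(K+1)$, and with $k$-th Fourier coefficient $\widehat{S^\pm_K}(k)$ satisfying $|\widehat{S^\pm_K}(k)|\le \frac{1}{K+1}+\min\!\bigl(\beta-\alpha,\tfrac{1}{\pi|k|}\bigr)$ for $1\le |k|\le K$ (and vanishing for $|k|>K$). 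These facts are exactly \cite[Chapter 1]{Mont} and may be quoted verbatim. Second, apply the upper bound polynomial:
\begin{equation*}
Z_f(N_2,N_3,N_4;\alpha,\beta)=\sum_{(n_2,n_3,n_4)\in\mathcal T}\mathbf 1_I\bigl(f(n_2,n_3,n_4)\bigr)\le\sum_{(n_2,n_3,n_4)\in\mathcal T}S^+_K\bigl(f(n_2,n_3,n_4)\bigr).
\end{equation*}
Third, expand $S^+_K$ into its finite Fourier series and interchange the (finite) sum over $k$ with the sum over $\mathcal T$; the $k=0$ term contributes $\bigl((\beta-\alpha)+\tfrac{1}{K+1}\bigr)\#\mathcal T(N_2,N_3,N_4)$, and each term with $1\le|k|\le K$ contributes $\widehat{S^+_K}(k)\,E_k(N_2,N_3,N_4)$ in absolute value at most $\bigl(\tfrac{1}{K+1}+\min(\beta-\alpha,\tfrac{1}{\pi k})\bigr)|E_k(N_2,N_3,N_4)|$. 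Using $E_{-k}=\overline{E_k}$ to fold the negative $k$ onto the positive ones produces the factor $2$ in front of the sum over $1\le k\le K$. This yields the upper bound $Z_f\le(\beta-\alpha)\#\mathcal T+\tfrac{\#\mathcal T}{K+1}+2\sum_{k=1}^K(\cdots)|E_k|$. Fourth, repeat the identical computation with $S^-_K$ to get the matching lower bound $Z_f\ge(\beta-\alpha)\#\mathcal T-\tfrac{\#\mathcal T}{K+1}-2\sum_{k=1}^K(\cdots)|E_k|$. Combining the two inequalities gives precisely \eqref{eq-thm-montgomery}.

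Since every step is a transcription of the one-dimensional argument with the index set $\{1,\dots,N\}$ replaced by $\mathcal T(N_2,N_3,N_4)$, there is no real obstacle: the Selberg polynomials live on the torus $\mathbb R/\mathbb Z$ and know nothing about the shape of the summation range, so linearity and the triangle inequality do all the work. The one point that warrants a sentence of care is the folding $k\mapsto -k$: this requires only that $f$ be real-valued (so that $E_{-k}=\overline{E_k}$ and $|E_{-k}|=|E_k|$), which is part of the hypothesis, and that $\widehat{S^\pm_K}(-k)=\overline{\widehat{S^\pm_K}(k)}$ with the same modulus bound, which holds because $S^\pm_K$ are real. I would therefore present the proof briefly, citing \cite[Chapter 1, Theorem 1]{Mont} for the existence and properties of the Selberg majorant and minorant, and then carrying out the three-line expansion above; the main ``content'' of applying this theorem in the paper will come later, when one must actually bound $E_k(N_2,N_3,N_4)$ for $f(n_2,n_3,n_4)=2n_3\sqrt{n_2n_4}$, but that is not needed here.
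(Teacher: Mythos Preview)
Your proposal is correct and follows essentially the same approach as the paper: bound the indicator of $[\alpha,\beta]$ above and below by the Selberg polynomials $S_K^\pm$ from \cite[Chapter~1]{Mont}, expand into Fourier series, separate the $k=0$ term, and use the standard coefficient bounds together with $|E_{-k}|=|E_k|$ to obtain the stated inequality. The paper's proof is slightly terser (it writes out only the upper bound and then invokes $S_K^-$ in a sentence), but the structure is identical.
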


\begin{proof} For each positive integer $K$, let $$S_K^{+}(n) = \sum_{-K \leq k \leq K} \widehat{S}_K^{+}(k) \, e{(kn)}$$
be the Selberg polynomial upper bounding the characteristic function of $[\alpha, \beta]$ as defined in~\cite[p.~6]{Mont}.
Then
\begin{eqnarray*}
Z_f(N_2, N_3, N_4; \alpha, \beta) &\leq&
\sum_{{{n_2 \asymp N_2} \atop { n_3 \asymp N_3}} \atop {n_4 \asymp N_4}}
S_K^{+}(f(n_2, n_3, n_4)) \\
&=& \sum_{-K \leq k \leq K}  \widehat{S}_K^{+}(k) \sum_{{{n_2 \asymp N_2} \atop { n_3 \asymp N_3}} \atop {n_4 \asymp N_4}} e{\left( k f(n_2, n_3, n_4) \right)}.
\end{eqnarray*}
Now, since $$\widehat{S}_K^{+}(0) = \beta-\alpha + \frac{1}{K+1}$$
and $$E_0(N_2, N_3, N_4) = \# \mathcal{T}(N_2, N_3, N_4),$$
we have that
\begin{eqnarray*}
&&Z_f(N_2, N_3, N_4; \alpha, \beta) - (\beta-\alpha) \; \# \mathcal{T}(N_2, N_3, N_4) \\
&& \quad \quad \quad \quad \leq
\frac{\# \mathcal{T}(N_2, N_3, N_4)}{K+1} + \sum_{{-K \leq k \leq K}\atop{k\neq0}} \widehat{S}_K^{+}(k) E_k(N_2, N_3, N_4).
\end{eqnarray*}
It follows from properties of Selberg polynomials that
\begin{equation*}
|\widehat{S}_K^{+}(k)| \leq  \frac{1}{K+1} + \min{\left( \beta-\alpha, \frac{1}{\pi |k|} \right)}
\end{equation*}
for $0 < |k| \leq K$.  See \cite[p.~8]{Mont} for example.
Combining the inequalities from above, we have
\begin{eqnarray*}
&&Z_f(N_2, N_3, N_4; \alpha, \beta) - (\beta-\alpha) \; \# \mathcal{T}(N_2, N_3, N_4) \\
&& \quad \quad \quad \quad \leq
\frac{\# \mathcal{T}(N_2, N_3, N_4)}{K+1} + 2 \sum_{1 \leq k \leq K}
\left( \frac{1}{K+1} + \min{\left( \beta-\alpha, \frac{1}{\pi |k|} \right)} \right)
 \left| E_k(N_2, N_3, N_4) \right|.
\end{eqnarray*}
Using the Selberg polynomials $S^-_K(n)$ as defined in~\cite[p.~6]{Mont}, the other inequality follows, as does the theorem.
\end{proof}


For the remainder of the paper, we will specialize to the the sequence $f(n_2, n_3, n_4) = 2 n_3\sqrt{n_2n_4}$.
We now bound the sum appearing in Theorem \ref{thm-montgomery} to show that
the sequence $2n_3\sqrt{n_2n_4}$ is uniformly distributed modulo one.
In order to obtain our result without any conditions on the relative sizes of the parameters $N_2, N_3, N_4$, we bound the sum
appearing in (\ref{eq-thm-montgomery}) in two different ways (Lemmas \ref{firstbound} and \ref{secondbound}).
First, we use the following result from~\cite[p.~77]{Kr}.


\begin{lemma}\label{maxsum}
Let $g(t)$ be a real, continuously differentiable function on the interval $[a,b]$, with $|g'(t)|\ge \lambda>0$, and let $N>0$.
Then
\begin{equation*}
\sum_{a\le n\le b} \min{\left\{ N, 1/\|g(n)\| \right\}} \ll (|g(b)-g(a)| + 1)\left(N + \frac{1}{\lambda}\log(b-a+2)\right).
\end{equation*}
\end{lemma}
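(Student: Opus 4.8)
The plan is to localize the sum to the unit intervals of the range of $g$ and then to reduce each local piece to an elementary estimate for $\lambda$-separated real numbers.

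First I would observe that, since $g'$ is continuous and $|g'|\ge\lambda>0$ throughout $[a,b]$, it has constant sign, so $g$ is strictly monotone; assume $g$ is increasing, the decreasing case being identical. Then $g([a,b])$ is an interval of length $|g(b)-g(a)|$, so it meets $\ll|g(b)-g(a)|+1$ of the unit intervals $[m,m+1)$ with $m\in\Z$. This partitions $\Z\cap[a,b]$ into blocks $B_w$, where $B_w$ consists of those integers $n$ with $g(n)$ lying in the $w$-th such unit window; writing $s_w=\#B_w$, we have $\sum_w s_w\le b-a+1$, each $s_w\le b-a+1$, and the number of nonempty blocks is $\ll|g(b)-g(a)|+1$.

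Next, by the mean value theorem $|g(n+1)-g(n)|\ge\lambda$ for consecutive integers, so the numbers $\{g(n):n\in\Z\cap[a,b]\}$ are pairwise $\lambda$-separated; in particular this holds for the values attached to a single block $B_w$. I would then split the $w$-th window $[m,m+1)$ at its midpoint: on $[m,m+\tfrac12]$ one has $\|g(n)\|=g(n)-m$, and on $(m+\tfrac12,m+1)$ one has $\|g(n)\|=m+1-g(n)$, so on each half the quantities $\|g(n)\|$ are themselves nonnegative and $\lambda$-separated. The elementary point is that if $0\le v_1<v_2<\dots<v_s$ with $v_{j+1}-v_j\ge\lambda$, then $v_k\ge(k-1)\lambda$, whence
\begin{equation*}
\sum_{k=1}^{s}\min\!\Bigl\{N,\tfrac1{v_k}\Bigr\}\le N+\frac1\lambda\sum_{k=2}^{s}\frac1{k-1}\ll N+\frac1\lambda\log(s+2)
\end{equation*}
(values with $g(n)\in\Z$ contribute only the harmless $k=1$ term $N$). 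Applying this to each of the two halves of each window bounds the contribution of $B_w$ by $\ll N+\tfrac1\lambda\log(s_w+2)$.

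Finally I would sum over the $\ll|g(b)-g(a)|+1$ nonempty blocks, using $s_w\le b-a+1$ inside each logarithm, to get
\begin{equation*}
\sum_{a\le n\le b}\min\!\Bigl\{N,\tfrac1{\|g(n)\|}\Bigr\}\ll\bigl(|g(b)-g(a)|+1\bigr)\Bigl(N+\frac1\lambda\log(b-a+2)\Bigr),
\end{equation*}
the case $b-a<1$ (at most one integer point, so the sum is $\le N$) being immediate. The step needing the most care is this localization: one must correctly account for the shape of $\|\cdot\|$ on a unit window (zero at both endpoints, $\tfrac12$ at the centre) via the midpoint split, and for sequence values landing exactly on integers. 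It is precisely the passage there from $\lambda$-separation to a harmonic sum that manufactures the $\tfrac1\lambda\log$ factor in the bound.
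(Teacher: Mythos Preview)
Your sketch is correct and is essentially the standard proof of this inequality. Note, however, that the paper does not give its own proof of this lemma: it is quoted verbatim as a known result from Kr\"atzel's book \cite[p.~77]{Kr}. So there is nothing to compare against here; you have supplied a proof where the paper only gives a reference, and your argument (monotonicity of $g$, partition of the range into unit windows, midpoint split so that $\|g(n)\|$ is an affine function of $g(n)$ on each half, $\lambda$-separation via the mean value theorem, and the harmonic-sum bound for $\lambda$-separated nonnegative reals) is exactly the classical one.
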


\begin{lemma} \label{firstbound}
For every $\varepsilon > 0$ and $K\ge 1$,
\begin{equation*}
\sum_{k\le K}\frac{1}{k}\left|E_k(N_2,N_3,N_4)\right| \ll (N_2 N_4)^{1/2+\varepsilon} N_3 K + (N_2 N_4)^{1+\varepsilon} \log{2K}.
\end{equation*}
\end{lemma}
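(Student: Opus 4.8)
The plan is to estimate $E_k(N_2,N_3,N_4)=\sum_{n_2\asymp N_2}\sum_{n_3\asymp N_3}\sum_{n_4\asymp N_4}e(2kn_3\sqrt{n_2n_4})$ by fixing $n_2$ and $n_4$ and carrying out the sum over $n_3$ first. For fixed $n_2,n_4$ the inner sum is a geometric-type sum $\sum_{n_3\asymp N_3}e(2kn_3\sqrt{n_2n_4})$, which is bounded by $\min\{N_3,\,1/\|2k\sqrt{n_2n_4}\|\}$ (with the usual convention that a value of $\|\cdot\|=0$ contributes the trivial bound $N_3$). So after the $n_3$-sum we are left with
\begin{equation*}
|E_k(N_2,N_3,N_4)|\ll\sum_{n_2\asymp N_2}\sum_{n_4\asymp N_4}\min\Bigl\{N_3,\frac{1}{\|2k\sqrt{n_2n_4}\|}\Bigr\}.
\end{equation*}
First I would fix $n_2$ and view the remaining sum over $n_4$ through Lemma~\ref{maxsum} with $g(t)=2k\sqrt{n_2 t}$ on $[N_4,2N_4]$; here $g'(t)=k\sqrt{n_2}\,t^{-1/2}$, so $|g'(t)|\ge\lambda$ with $\lambda\asymp k\sqrt{n_2}/\sqrt{N_4}=k(N_2/N_4)^{1/2}$ roughly, and $|g(2N_4)-g(N_4)|\ll k(N_2N_4)^{1/2}$. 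Lemma~\ref{maxsum} then gives $\sum_{n_4\asymp N_4}\min\{N_3,1/\|g(n_4)\|\}\ll(k(N_2N_4)^{1/2}+1)\bigl(N_3+(k(N_2/N_4)^{1/2})^{-1}\log N_4\bigr)$.

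Next I would sum this over $n_2\asymp N_2$ and then over $k\le K$ with the weight $1/k$. Summing over $n_2$ costs a factor $N_2$ (the bound is essentially uniform in $n_2\asymp N_2$, up to constants absorbed into the relevant powers of $N_2$), and then $\sum_{k\le K}1/k\cdot(k\cdot(\dots)+\dots)$ produces one term of size $K$ (from the $k$ in $k(N_2N_4)^{1/2}$ multiplied by $1/k$, summed $K$ times) and one term of size $\log 2K$ (from the $1/k$ alone). Matching the shapes, the dominant contributions are $N_2 N_3\cdot(N_2N_4)^{1/2}\cdot K$ — wait, one should be careful: after dividing the product out the two pieces are $k(N_2N_4)^{1/2}\cdot N_3$ and $k(N_2N_4)^{1/2}\cdot(k(N_2/N_4)^{1/2})^{-1}\log N_4=(N_4)\log N_4$-type, i.e. a $k$-independent $N_2N_4$-ish term; summing $1/k$ over $k\le K$ then converts the first into $N_3(N_2N_4)^{1/2}K$ and the second into $(N_2N_4)\log 2K$ (after the $n_2$-sum). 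Tracking the exact exponents and absorbing the logarithm from Lemma~\ref{maxsum} together with any loss from the $n_2$-summation into an arbitrary $\varepsilon$, this yields
\begin{equation*}
\sum_{k\le K}\frac1k|E_k(N_2,N_3,N_4)|\ll(N_2N_4)^{1/2+\varepsilon}N_3K+(N_2N_4)^{1+\varepsilon}\log 2K,
\end{equation*}
as claimed.

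The main obstacle I expect is bookkeeping rather than a conceptual hurdle: one must apply Lemma~\ref{maxsum} with the right variable (summing over $n_4$ for fixed $n_2$, or symmetrically over $n_2$ for fixed $n_4$), verify that the derivative lower bound $\lambda$ and the total variation $|g(b)-g(a)|$ come out with the stated powers of $N_2,N_3,N_4,k$, and confirm that the $\varepsilon$ genuinely absorbs all the $\log$ factors (the $\log N_4$ from Lemma~\ref{maxsum}, the $\log 2K$ that is kept explicit, and any divisor-type or range losses from the outer $n_2$ sum) — and that nothing worse than a power of $(N_2N_4)^\varepsilon$ is incurred. A secondary subtlety is the handling of the degenerate pairs $(n_2,n_4)$ for which $2k\sqrt{n_2n_4}$ is close to or equal to an integer: these are controlled precisely because Lemma~\ref{maxsum} already incorporates the $\min\{N,1/\|g(n)\|\}$ truncation, so no separate argument is needed, but one should note explicitly that the inner $n_3$-sum is bounded by this truncated quantity.
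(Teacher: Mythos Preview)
Your overall plan---sum over $n_3$ first to get $\min\{N_3,1/\|2k\sqrt{n_2n_4}\|\}$, then control the remaining sum via Lemma~\ref{maxsum}---is the same as the paper's. However, the way you execute the second step has a genuine gap. If you fix $n_2$ and apply Lemma~\ref{maxsum} only in the variable $n_4$, the term $|g(b)-g(a)|\asymp k\sqrt{n_2N_4}$ together with the trivial sum over $n_2\asymp N_2$ produces a main contribution of size
\[
\sum_{n_2\asymp N_2} k\sqrt{n_2N_4}\cdot N_3 \;\asymp\; kN_2^{3/2}N_4^{1/2}N_3,
\]
and after the $k$-sum this is $N_2^{3/2}N_4^{1/2}N_3K$, a full factor of $N_2$ (not merely $(N_2N_4)^\varepsilon$) larger than the target $(N_2N_4)^{1/2+\varepsilon}N_3K$. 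Symmetrizing in $n_2,n_4$ does not save you either: the best you get is $\min\{N_2,N_4\}\cdot(N_2N_4)^{1/2}N_3K$, which is still too large when $N_2$ and $N_4$ are comparable. So the loss here is not bookkeeping that an $\varepsilon$ can absorb.

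The missing idea is to merge $n_2$ and $n_4$ into the single variable $n=n_2n_4$ before invoking Lemma~\ref{maxsum}. Writing $b_n=\#\{(n_2,n_4):n_2\asymp N_2,\ n_4\asymp N_4,\ n_2n_4=n\}$ and using the divisor bound $b_n\ll n^{\varepsilon/2}$, the double sum collapses to
\[
\sum_{N_2N_4\le n\le 4N_2N_4} b_n\,\min\Bigl\{N_3,\frac{1}{\|2k\sqrt{n}\|}\Bigr\}
\ll (N_2N_4)^{\varepsilon/2}\sum_{n\asymp N_2N_4}\min\Bigl\{N_3,\frac{1}{\|2k\sqrt{n}\|}\Bigr\}.
\]
Now Lemma~\ref{maxsum} applied with $g(t)=2k\sqrt{t}$ on $[N_2N_4,4N_2N_4]$ gives $|g(b)-g(a)|\asymp k(N_2N_4)^{1/2}$ and $\lambda\asymp k(N_2N_4)^{-1/2}$, yielding exactly the stated bound after the $k$-sum. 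This is precisely what the paper does, and your passing mention of ``divisor-type\ldots\ losses from the outer $n_2$ sum'' suggests you may have sensed this, but the argument as written does not carry it out.
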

\begin{proof}
Let
\begin{equation*}
b_n = \sum_{n_2 \asymp N_2} \sum_{{n_4 \asymp N_4} \atop {n_2 n_4 = n}} 1,
\end{equation*}
and note that $b_n\ll n^{\varepsilon/2}$.
Recall the well-known bound
\begin{equation*}
 \sum_{n \asymp N} e( \alpha n) \ll \min \left\{ N, 1 / \| \alpha \| \right\}.
\end{equation*}
See~\cite[p.~199]{IK} for example.
Applying Lemma \ref{maxsum}, we have
\begin{equation*}
\begin{split}
\sum_{k\le K}\frac{1}{k}\left|E_k(N_2,N_3,N_4)\right|
&= \sum_{k \leq K} \frac{1}{k} \sum_{N_2N_4\le n \le 4N_2N_4} b_n \sum_{n_3 \asymp N_3} e(2kn^{1/2} n_3) \\
&\ll (N_2 N_4)^{\varepsilon/2} \sum_{k \leq K} \frac{1}{k} \sum_{N_2N_4\le n \le 4 N_2 N_4}
\min \left\{ N_3, \frac{1}{\|2 k n^{1/2} \|} \right\} \\
&\ll (N_2 N_4)^{\varepsilon/2} \sum_{k \leq K}
(N_2 N_4)^{1/2} \left( N_3 + k^{-1} (N_2 N_4)^{1/2} \log(2N_2 N_4) \right) \\
&\ll (N_2 N_4)^{\varepsilon}
\left( (N_2 N_4)^{1/2} N_3 K + (N_2 N_4) \log 2K \right).
\end{split}
\end{equation*}
\end{proof}


We now bound the same sum using the following consequence of the van der Corput method found in~\cite[p.~94]{Ten}.
\begin{lemma}\label{van der corput}
Let $g(t)$ be a twice continuously differentiable function on the interval $[a,b]$ such that
$|g''(t)|\asymp \lambda >0$.
Then
\begin{equation*}
\sum_{a\le n\le b}e(g(n))\ll (b-a+1)\lambda^{1/2}+\lambda^{-1/2}.
\end{equation*}
\end{lemma}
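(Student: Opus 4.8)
The plan is to derive this as the classical van der Corput second-derivative test, deducing it from the elementary first-derivative test (the Kuzmin--Landau inequality) via a partition of $[a,b]$. Throughout I write the hypothesis $|g''(t)|\asymp\lambda$ as $c_1\lambda\le|g''(t)|\le c_2\lambda$ for absolute constants $c_1,c_2>0$ and all $t\in[a,b]$. The first move is to dispose of the range $\lambda\ge 1/4$ trivially: there the sum has at most $b-a+1\le 2(b-a+1)\lambda^{1/2}$ terms, so the claimed bound is immediate. Assume henceforth $\lambda<1/4$. Since $g''$ is continuous and never zero it has constant sign, and after replacing $g$ by $-g$ if necessary (which conjugates the exponential sum, hence preserves its modulus) I may assume $g''>0$, so that $g'$ is strictly increasing on $[a,b]$.

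Next I would partition $[a,b]$ according to the integer part of $g'$. As $t$ runs over $[a,b]$ the value $g'(t)$ increases from $g'(a)$ to $g'(b)$, with $g'(b)-g'(a)=\int_a^b g''\le c_2\lambda(b-a)$; for each integer $\nu$ meeting the range of $g'$ (there are $\ll\lambda(b-a)+1$ such) set $J_\nu=\{t\in[a,b]:\nu\le g'(t)<\nu+1\}$. Since $g'$ increases at rate at least $c_1\lambda$, each $J_\nu$ is a subinterval of length $\le 1/(c_1\lambda)$. On $J_\nu$ I put $h(t)=g(t)-\nu t$, so that $h'(t)=g'(t)-\nu$ is increasing and takes values in $[0,1)$, with distance to the nearest integer equal to $\min(h'(t),1-h'(t))$.

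The key step is then to handle each $J_\nu$ by peeling off its ends. With a parameter $\eta\in(0,1/2]$, the subset of $J_\nu$ on which $h'(t)<\eta$ or $h'(t)>1-\eta$ is a union of at most two subintervals of total length $\le 2\eta/(c_1\lambda)$, hence contains $\ll\eta/\lambda+1$ integers; over those integers I bound the partial exponential sum trivially by $\ll\eta/\lambda+1$. On the remaining interval, where $\eta\le h'(t)\le 1-\eta$, the derivative $h'$ is monotone and stays at distance $\ge\eta$ from every integer, so the first-derivative test (Kuzmin--Landau, applied separately on the two pieces $\eta\le h'\le 1/2$ and $1/2\le h'\le 1-\eta$ if one wishes to avoid any wrap-around hypothesis) bounds that partial sum by $\ll 1/\eta$. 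Hence $J_\nu$ contributes $\ll\eta/\lambda+1+1/\eta$; choosing $\eta=\lambda^{1/2}$, which is legitimate since $\lambda<1/4$, this is $\ll\lambda^{-1/2}$. Summing over the $\ll\lambda(b-a)+1$ indices $\nu$ yields
\[
\sum_{a\le n\le b}e(g(n))\ll(\lambda(b-a)+1)\lambda^{-1/2}=(b-a)\lambda^{1/2}+\lambda^{-1/2}\ll(b-a+1)\lambda^{1/2}+\lambda^{-1/2},
\]
which is the assertion.

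I expect the only genuinely delicate point to be the balance in this last splitting: near the ends of $J_\nu$ the derivative $h'$ approaches an integer and the first-derivative test degenerates, so those pieces must be short — and this shortness is exactly what the lower bound $|g''|\gg\lambda$ buys — in order that bounding them trivially costs no more than $\lambda^{-1/2}$ per piece; the choice $\eta=\lambda^{1/2}$ is what equalises the trivial contribution $\eta/\lambda$ against the Kuzmin--Landau contribution $1/\eta$. As an alternative one could instead run the dual ``$B$-process'' via truncated Poisson summation, replacing the sum over $J_\nu$ by $\int_{J_\nu}e(g(t)-\nu t)\,dt\ll\lambda^{-1/2}$ through stationary phase; this produces the same bound but requires invoking Poisson summation, which the partition argument above avoids.
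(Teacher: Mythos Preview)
Your argument is correct and is precisely the classical derivation of the van der Corput second-derivative test from the Kuzmin--Landau first-derivative inequality. The paper, however, does not supply its own proof of this lemma at all: it simply quotes the result from \cite[p.~94]{Ten} and moves on. Thus there is no ``paper's proof'' to compare against; what you have written is essentially the standard textbook argument (and indeed is the one given in Tenenbaum), so your proposal is both correct and aligned with what the paper is implicitly invoking. Your treatment of the edge cases (disposing of $\lambda\ge 1/4$ trivially, fixing the sign of $g''$, and the remark that the Kuzmin--Landau step can be split at $h'=1/2$ if desired) is clean, and the balancing choice $\eta=\lambda^{1/2}$ is exactly the right optimisation.
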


\begin{lemma} \label{secondbound}
For every $K\ge 1$,
\begin{equation*}
\sum_{k\le K}\frac{1}{k}\left|E_k(N_2,N_3,N_4)\right| \ll K^{1/2} N_3^{3/2}  (N_2 N_4)^{3/4}+N_3^{1/2}(N_2 N_4)^{3/4}.
\end{equation*}
\end{lemma}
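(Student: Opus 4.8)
The plan is to bound $E_k(N_2,N_3,N_4)$ by applying the van der Corput estimate of Lemma~\ref{van der corput} to its innermost sum. Since the summand $2kn_3\sqrt{n_2n_4}$ is symmetric in $n_2$ and $n_4$, one has $E_k(N_2,N_3,N_4)=E_k(N_4,N_3,N_2)$, so I would assume without loss of generality that $N_2\ge N_4$ and isolate the variable $n_2$: for a fixed $k\in\{1,\dots,K\}$,
\[
E_k(N_2,N_3,N_4)=\sum_{n_3\asymp N_3}\ \sum_{n_4\asymp N_4}\ \sum_{n_2\asymp N_2}e\bigl(g(n_2)\bigr),\qquad g(t)=2kn_3n_4^{1/2}\,t^{1/2}.
\]
Here $g''(t)=-\tfrac12 kn_3n_4^{1/2}t^{-3/2}$, so on the interval $[N_2,2N_2]$ one has $|g''(t)|\asymp\lambda:=kN_3N_4^{1/2}N_2^{-3/2}$, using $n_3\asymp N_3$, $n_4\asymp N_4$, and the fact that $t^{-3/2}$ varies by a bounded factor on $[N_2,2N_2]$. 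Lemma~\ref{van der corput} then gives
\[
\Bigl|\sum_{n_2\asymp N_2}e(g(n_2))\Bigr|\ll N_2\lambda^{1/2}+\lambda^{-1/2}\ll k^{1/2}N_3^{1/2}N_4^{1/4}N_2^{1/4}+k^{-1/2}N_3^{-1/2}N_4^{-1/4}N_2^{3/4}.
\]

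The next step would be to sum this trivially over the $\asymp N_3N_4$ pairs $(n_3,n_4)$, which yields
\[
\bigl|E_k(N_2,N_3,N_4)\bigr|\ll k^{1/2}N_3^{3/2}N_4^{5/4}N_2^{1/4}+k^{-1/2}N_3^{1/2}(N_2N_4)^{3/4}.
\]
Because $N_4\le N_2$, the first term satisfies $N_4^{5/4}N_2^{1/4}\le N_4^{3/4}N_2^{3/4}=(N_2N_4)^{3/4}$, so by the symmetry noted above one gets, for all $N_2,N_3,N_4$,
\[
\bigl|E_k(N_2,N_3,N_4)\bigr|\ll k^{1/2}N_3^{3/2}(N_2N_4)^{3/4}+k^{-1/2}N_3^{1/2}(N_2N_4)^{3/4}.
\]
Dividing by $k$ and summing over $1\le k\le K$, the first term then contributes $N_3^{3/2}(N_2N_4)^{3/4}\sum_{k\le K}k^{-1/2}\ll K^{1/2}N_3^{3/2}(N_2N_4)^{3/4}$ and the second contributes $N_3^{1/2}(N_2N_4)^{3/4}\sum_{k\le K}k^{-3/2}\ll N_3^{1/2}(N_2N_4)^{3/4}$, which is precisely the asserted bound.

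I do not expect a genuine obstacle here; the argument is a direct application of the second-derivative test. The one point requiring care is the choice of variable: the summand is symmetric in $n_2$ and $n_4$ while the van der Corput bound is not, so one must apply it to the larger of the two variables — equivalently, take the better of the two symmetric estimates and use $\min\{N_2^{5/4}N_4^{1/4},\,N_4^{5/4}N_2^{1/4}\}\le(N_2N_4)^{3/4}$ — in order to land on the symmetric final bound. Beyond that, one only needs to check that $|g''|$ has a single order of magnitude on $[N_2,2N_2]$ and to keep track of the exponents.
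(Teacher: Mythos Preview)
Your argument is correct and essentially identical to the paper's: both apply the second-derivative van der Corput bound (Lemma~\ref{van der corput}) to one of the square-root variables, sum trivially over the remaining variables, and then exploit the $n_2\leftrightarrow n_4$ symmetry via $\min\{N_2^{5/4}N_4^{1/4},\,N_4^{5/4}N_2^{1/4}\}\le(N_2N_4)^{3/4}$. The only cosmetic difference is that you assume $N_2\ge N_4$ and sum over $n_2$, whereas the paper sums over $n_4$, states the symmetric bound, and then takes the minimum.
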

\begin{proof}
We first apply Lemma~\ref{van der corput} with $g(t)=2kn_3\sqrt{n_2t}$,
noting that
$|g''(t)|=kn_3\sqrt{n_2}/2t^{3/2}$. This yields
\begin{equation*}
\begin{split}
\sum_{k\le K}\frac{1}{k}\left|E_k(N_2,N_3,N_4)\right|
&\ll\sum_{k\le K}\frac{1}{k}\sum_{\substack{n_2\asymp N_2,\\ n_3\asymp N_3}}
	\left(N_4^{1/4}(kn_3\sqrt{n_2})^{1/2}+N_4^{3/4}(kn_3\sqrt{n_2})^{-1/2}\right)\\
&\ll N_4^{1/4}K^{1/2}N_3^{3/2}N_2^{5/4}+N_4^{3/4}N_3^{1/2}N_2^{3/4}.
\end{split}
\end{equation*}
Then, applying Lemma~\ref{van der corput} again with $g(t)=2kn_3\sqrt{n_4t}$,
we see that the same bound holds with the roles of $N_2$ and $N_4$ reversed.
Therefore, we have
\begin{equation*}
\begin{split}
\sum_{k\le K}\frac{1}{k}\left|E_k(N_2,N_3,N_4)\right|
&\ll K^{1/2}N_3^{3/2}\min\{N_2^{5/4}N_4^{1/4}, N_2^{1/4}N_4^{5/4}\}+N_3^{1/2}(N_2N_4)^{3/4}\\
&\ll K^{1/2} N_3^{3/2}  (N_2 N_4)^{3/4}+N_3^{1/2}(N_2 N_4)^{3/4}.
\end{split}
\end{equation*}
\end{proof}



Combining Lemmas~\ref{firstbound} and~\ref{secondbound},
we now show that the triple sequence $2n_3\sqrt{n_2n_4}$ is uniformly distributed modulo one.

\begin{thm} \label{thm-uniform-dist-f}
Let $N_2, N_3, N_4 \geq 1$, and $0 \leq \alpha <  \beta \leq 1$.
Then
\begin{equation*}
\lim_{N_2 N_4 \rightarrow \infty} \frac{Z_f(N_2, N_3, N_4; \alpha, \beta)}{N_2N_3N_4} = \beta - \alpha.
\end{equation*}
\end{thm}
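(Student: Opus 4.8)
The plan is to apply Theorem~\ref{thm-montgomery} with the sequence $f(n_2,n_3,n_4)=2n_3\sqrt{n_2n_4}$ and make an appropriate choice of the auxiliary parameter $K$ depending on $N_2,N_3,N_4$. By Weyl's criterion (in the quantitative form of Theorem~\ref{thm-montgomery}), it suffices to show that
\begin{equation*}
\frac{1}{K+1}+\frac{1}{N_2N_3N_4}\sum_{k\le K}\Bigl(\frac{1}{K+1}+\frac{1}{\pi k}\Bigr)\left|E_k(N_2,N_3,N_4)\right|\longrightarrow 0
\end{equation*}
as $N_2N_4\to\infty$, for a suitable $K=K(N_2,N_3,N_4)\to\infty$. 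The first term $\tfrac{1}{K+1}$ forces $K\to\infty$; the crux is bounding the exponential-sum average $\sum_{k\le K}\tfrac1k|E_k|$, for which we have the two complementary estimates of Lemmas~\ref{firstbound} and~\ref{secondbound} (and the contribution of the $\tfrac{1}{K+1}$-piece inside the sum is handled by the trivial bound $|E_k|\le N_2N_3N_4$, giving $\ll K\cdot N_2N_3N_4/(K+1)\cdot(1/(K+1))$... more carefully, $\sum_{k\le K}\tfrac{1}{K+1}|E_k|\le\tfrac{K}{K+1}N_2N_3N_4$, so after dividing by $N_2N_3N_4$ this piece is $O(1)$, which is not enough, so one must instead keep that piece paired with $\min(\beta-\alpha,1/(\pi k))$ — in fact Theorem~\ref{thm-montgomery} already groups the $1/(K+1)$ with the $\min$ term, and for the $1/(K+1)$ portion one uses $\sum_{k\le K}|E_k|/(K+1)$, which after dividing by $N_2N_3N_4$ needs the nontrivial bounds too; so really all of $\sum_{k\le K}|E_k|$ and $\sum_{k\le K}|E_k|/k$ must be controlled, and since $\sum_{k\le K}|E_k|\le K\max_k|E_k|$ while $\sum_{k\le K}|E_k|/k\ge$ comparable quantities, it is cleanest to bound $\sum_{k\le K}|E_k|/k$ and note $\sum_{k\le K}|E_k|/(K+1)\le\sum_{k\le K}|E_k|/k$ up to a $\log$, or simply redo Lemmas~\ref{firstbound}–\ref{secondbound} without the $1/k$; either way the same two bounds apply).

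Concretely, I would argue as follows. Dividing the bound of Lemma~\ref{firstbound} by $N_2N_3N_4$ gives a contribution
\begin{equation*}
\ll \frac{(N_2N_4)^{1/2+\varepsilon}K}{N_2N_4}+\frac{(N_2N_4)^{1+\varepsilon}\log 2K}{N_2N_3N_4}=\frac{K}{(N_2N_4)^{1/2-\varepsilon}}+\frac{(N_2N_4)^{\varepsilon}\log 2K}{N_3},
\end{equation*}
and dividing the bound of Lemma~\ref{secondbound} by $N_2N_3N_4$ gives
\begin{equation*}
\ll \frac{K^{1/2}N_3^{1/2}}{(N_2N_4)^{1/4}}+\frac{1}{N_3^{1/2}(N_2N_4)^{1/4}}.
\end{equation*}
The second term of each display tends to $0$ as $N_2N_4\to\infty$ (the Lemma~\ref{firstbound} term needs $K$ to grow no faster than a small power, e.g.\ $K\le(N_2N_4)^{\varepsilon}$ would already be problematic if $N_3$ is bounded, so one uses the Lemma~\ref{secondbound} bound when $N_3$ is large relative to $N_2N_4$ and the Lemma~\ref{firstbound} bound otherwise). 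The plan is to choose $K=\big\lfloor\min\{(N_2N_4)^{1/4-\varepsilon},\,(N_2N_4)^{1/2}/N_3^{1-\varepsilon}\}\big\rfloor+1$ — or more simply to split into the two regimes $N_3\le (N_2N_4)^{1/3}$ and $N_3>(N_2N_4)^{1/3}$ — and check in each regime that $K\to\infty$ while both displayed quantities $\to0$. I would verify: in the regime where Lemma~\ref{firstbound} is used, take $K$ a small enough power of $N_2N_4$ that $K/(N_2N_4)^{1/2-\varepsilon}\to0$ and $(N_2N_4)^{\varepsilon}\log 2K/N_3\to0$ and $1/K\to0$; in the regime where Lemma~\ref{secondbound} is used, take $K$ so that $K^{1/2}N_3^{1/2}/(N_2N_4)^{1/4}\to0$ and $1/K\to 0$, which is possible since in that regime $N_3$ is large so $(N_2N_4)^{1/2}/N_3$ may still be forced to grow — here one must be a little careful and it may be necessary to observe that $N_3$ cannot be too large for the conclusion to be nonvacuous, but in fact $N_2N_4\to\infty$ with no constraint on $N_3$ is the hypothesis, so one should simply set $K\asymp (N_2N_4)^{1/6}$, say, and check: $K^{1/2}N_3^{1/2}/(N_2N_4)^{1/4}$ need not go to $0$ if $N_3$ is huge, so instead keep the Lemma~\ref{firstbound} bound whenever $N_3\ge (N_2N_4)^{1/2}$ — there $K/(N_2N_4)^{1/2-\varepsilon}\to 0$ for $K\asymp(N_2N_4)^{1/4}$ and $(N_2N_4)^{\varepsilon}\log K/N_3\le (N_2N_4)^{\varepsilon-1/2}\log K\to0$ — and use Lemma~\ref{secondbound} when $N_3< (N_2N_4)^{1/2}$, where $K^{1/2}N_3^{1/2}/(N_2N_4)^{1/4}< K^{1/2}/(N_2N_4)^{1/4}$... wait that still needs $K=o((N_2N_4)^{1/2})$, fine, and the tail $1/N_3^{1/2}(N_2N_4)^{1/4}$: if $N_3$ is bounded this $\to0$ since $N_2N_4\to\infty$, and if $N_3\to\infty$ it $\to0$ as well. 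So with $K=\lfloor(N_2N_4)^{1/8}\rfloor$, say, and this dichotomy on $N_3$, every term tends to $0$.

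The main obstacle will be organizing the choice of $K$ and the case split on the size of $N_3$ so that \emph{both} error contributions vanish \emph{uniformly} as $N_2N_4\to\infty$ with no hypothesis whatsoever on $N_3$ (which may stay bounded, or grow arbitrarily fast). The two bounds of Lemmas~\ref{firstbound} and~\ref{secondbound} are genuinely complementary — Lemma~\ref{firstbound} is strong when $N_3$ is large (its main term has only $N_3^1$ and gains a full power of $N_2N_4$), while Lemma~\ref{secondbound} is strong when $N_3$ is small — and the content of the proof is checking that their union covers all ranges. Once the bookkeeping is done, one applies Theorem~\ref{thm-montgomery}: the left side is $|Z_f-(\beta-\alpha)\#\mathcal T(N_2,N_3,N_4)|$, and since $\#\mathcal T(N_2,N_3,N_4)\asymp N_2N_3N_4$, dividing through by $N_2N_3N_4$ and letting $N_2N_4\to\infty$ yields $Z_f(N_2,N_3,N_4;\alpha,\beta)/(N_2N_3N_4)\to\beta-\alpha$, which is the assertion of the theorem. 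A minor point to handle at the end is that $\#\mathcal T(N_2,N_3,N_4)=(\lfloor 2N_2\rfloor-\lceil N_2\rceil+1)\cdots$ equals $N_2N_3N_4(1+O(1/\min N_j))$, so replacing $\#\mathcal T$ by $N_2N_3N_4$ in the limit costs only an error that vanishes as $N_2N_4\to\infty$.
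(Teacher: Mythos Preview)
Your strategy is the same as the paper's: feed Lemmas~\ref{firstbound} and~\ref{secondbound} into Theorem~\ref{thm-montgomery} and choose $K$ so that all terms, divided by $N_2N_3N_4$, tend to $0$ as $N_2N_4\to\infty$ regardless of $N_3$. Your handling of the $\tfrac{1}{K+1}$ piece is also fine once you observe (as you eventually do) that $\sum_{k\le K}\tfrac{1}{K+1}|E_k|\le\sum_{k\le K}\tfrac{1}{k}|E_k|$, so the weighted sum $\sum_{k\le K}|E_k|/k$ controls everything.

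There is, however, a concrete arithmetic slip in your case split. With threshold $N_3=(N_2N_4)^{1/2}$ and $K=(N_2N_4)^{1/8}$, the Lemma~\ref{secondbound} contribution in the ``small $N_3$'' regime is
\[
\frac{K^{1/2}N_3^{1/2}}{(N_2N_4)^{1/4}}\ <\ \frac{(N_2N_4)^{1/16}\,(N_2N_4)^{1/4}}{(N_2N_4)^{1/4}}=(N_2N_4)^{1/16},
\]
which diverges. (Your line ``$K^{1/2}N_3^{1/2}/(N_2N_4)^{1/4}<K^{1/2}/(N_2N_4)^{1/4}$'' would require $N_3<1$.) The fix is easy: lower the threshold to, say, $N_3=(N_2N_4)^{1/4}$; then the same $K=(N_2N_4)^{1/8}$ works in both regimes.

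The paper avoids the case split altogether. It takes the single choice $K=(N_2N_4)^{1/4}$, keeps \emph{both} lemma bounds simultaneously, and uses $\min(x,y)\le\sqrt{xy}$ on the two ``bad'' terms to obtain
\[
\sum_{k\le K}\frac{|E_k|}{k}\ \ll\ (N_2N_4)^{3/4+\varepsilon}N_3+(N_2N_4)^{15/16+\varepsilon}N_3^{3/4},
\]
which after division by $N_2N_3N_4$ gives $(N_2N_4)^{-1/4+\varepsilon}+(N_2N_4)^{-1/16+\varepsilon}N_3^{-1/4}\to0$. This is cleaner than juggling thresholds, but your dichotomy (once the exponent is corrected) yields the same conclusion.
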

\begin{rmk}
Note that we do not require that each of $N_2,N_3$ and $N_4$ tends to infinity in the above limit.
Rather, we only require that the product $N_2N_4\rightarrow\infty$.
\end{rmk}
\begin{proof}
Fix $0<\varepsilon<1/16$.
Applying Lemmas~\ref{firstbound} and~\ref{secondbound} with $K=(N_2N_4)^{1/4}$, we see that
\begin{equation*}
\begin{split}
\sum_{k\le K}\frac{1}{k}\left|E_k(N_2,N_3,N_4)\right|
&\ll(N_2N_4)^{3/4+\varepsilon}N_3+\min\{(N_2N_4)^{1+\varepsilon},N_3^{3/2}(N_2N_4)^{7/8}\}\\
&\ll (N_2N_4)^{3/4+\varepsilon}N_3+ (N_2N_4)^{15/16+\varepsilon}N_3^{3/4}.
\end{split}
\end{equation*}
Since (with this same choice of $K$) we have $N_2N_3N_4/K=(N_2N_4)^{3/4}N_3$,
using the above bound in Theorem~\ref{thm-montgomery} yields the theorem.
\end{proof}


\begin{proof}[Proof of Theorem \ref{splitbound-average}]
Let $F(N_2,N_4)$ be any function tending to infinity with $N_2N_4$ and satisfying the bound
\begin{equation}\label{f bound}
F(N_2,N_4)\le\frac{N_1N_2^{1/4}}{18N_3^{1/2}N_4^{1/4}}.
\end{equation}
Without loss of generality, we may assume that $N_2N_4$ is large enough that $F(N_2,N_4)\ge 1$.
Hence, we may write
\begin{equation*}
\#S(N_1, N_2, N_3, N_4)=\#S_1(N_1, N_2, N_3, N_4)  + \#S_2(N_1, N_2, N_3, N_4),
\end{equation*}
where
\begin{eqnarray*}
S_1(N_1, N_2, N_3, N_4)
&:=& \left\{ (n_1, n_2, n_3, n_4) \in {S}(N_1, N_2, N_3, N_4): \| 2 n_3\sqrt{n_2n_4} \| \le 1/F(N_2,N_4) \right\},\\
S_2(N_1, N_2, N_3, N_4)
&:=& \left\{ (n_1, n_2, n_3, n_4) \in {S}(N_1, N_2, N_3, N_4): \| 2 n_3\sqrt{n_2n_4} \| > 1/F(N_2,N_4) \right\}.
\end{eqnarray*}
It follows from Theorem 5.6 that $\#S_1(N_1, N_2, N_3, N_4)=\underline o(N_1 N_2 N_3 N_4)$ as $N_2 N_4 \rightarrow \infty$.
On the other hand, if $(n_1, n_2, n_3, n_4) \in {S}_2(N_1, N_2, N_3, N_4)$, then by Proposition 3.1
\begin{equation*}\label{calc bound}
\begin{split}
N_1
\le n_1
&<\frac{10n_3^{1/2}n_4^{1/4}}{||2n_3\sqrt{n_2n_4}|| n_2^{1/4}}+\frac{1}{n_2^{3/4}n_3^{1/2}n_4^{1/4}}\\
&<\frac{10(2N_3)^{1/2}(2N_4)^{1/4}}{1/F(N_2,N_4)N_2^{1/4}}+\frac{1}{N_2^{3/4}N_3^{1/2}N_4^{1/4}}\\
&< 18F(N_2,N_4)\frac{N_3^{1/2}N_4^{1/4}}{N_2^{1/4}}.
\end{split}
\end{equation*}
However, this contradicts our choice of $F(N_2,N_4)$ that satisfies~\eqref{f bound}.
Therefore, we conclude that $S_2(N_1,N_2,N_3,N_4)$ is empty,
and hence $S(N_1, N_2, N_3, N_4)=\underline o(N_1 N_2 N_3 N_4)$ as $N_2 N_4 \rightarrow \infty$.
\end{proof}

\subsection*{Acknowledgements}
We would like to thank J. Achter, P. Clark, E. Goren, A. Harper, A. Silverberg, and J. Wu for useful discussions
related to this work. The argument presented in Section 4, which allows us to obtain the result of Theorem 1.2
without any conditions on the relative size of the parameters $N_2, N_3, N_4$, was suggested to us
by J. Wu. We are very thankful for his gracious help.
We would also like to express our gratitude to the AMS Mathematics Research Communities 2012 program for
providing us with the opportunity to work together in a very pleasant working
environment and for its generous financial support.


\providecommand{\bysame}{\leavevmode\hbox
to3em{\hrulefill}\thinspace}
\providecommand{\MR}{\relax\ifhmode\unskip\space\fi MR }
\providecommand{\nMRhref}[2]{%
  \href{http://www.ams.org/mathscinet-getitem?mr=#1}{#2}
} \providecommand{\href}[2]{#2}

\end{document}